\documentclass{amsart}
\usepackage{amsmath,amsthm,amssymb,IMjournal}

\usepackage[a4paper]{hyperref}
\usepackage[noadjust]{cite}

\newcommand{\mymod}[3]{#1 \equiv #2 \kern -0.5em \pmod{#3}}
\newcommand{\mynotmod}[3]{#1 \not \equiv #2 \kern -0.6em \pmod{#3}}

\begin{document}
\newtheorem{theorem}{Theorem}[section]
\newtheorem{corollary}[theorem]{Corollary}
\newtheorem{lemma}[theorem]{Lemma}
\newtheorem{proposition}[theorem]{Proposition}
\theoremstyle{remark}
\newtheorem{remark}[theorem]{Remark}

\theoremstyle{definition}
\newtheorem{definition}[theorem]{Definition}

\numberwithin{equation}{section}

\title{Introduction to generalized Leonardo-Alwyn hybrid numbers}

\author{Gamaliel Cerda-Morales}

%\rec{March 30, 2024}

%\dedicatory{Cordially dedicated to daughter Julieta}\enddedicatory

\abstract 
In \cite{Go}, G\"okba\c{s} defined a new type of number sequence called Leonardo-Alwyn sequence. In this paper, we consider the generalized Leonardo-Alwyn hybrid numbers and investigate some of their properties. We also give some applications related to the generalized Leonardo-Alwyn hybrid numbers in matrices.
\endabstract

\keywords
Complex number; generating function; hyperbolic number; Leonardo-Alwyn number; recurrence.
\endkeywords

\subjclass
11B37; 11R52; 20G20.
\endsubjclass

\section{Introduction}
Let $p$, $q$ and $r$ be integers such that $p^{2}+4q\neq 0$ the generalized Leonardo-Alwyn sequence $\{L\mathcal{A}_{n}^{(r)}\}_{n\geq 0}$ is described by
\begin{equation}\label{e1}
L\mathcal{A}_{n+3}^{(r)}=(1+p)L\mathcal{A}_{n+2}^{(r)}+(q-p)L\mathcal{A}_{n+1}^{(r)}-qL\mathcal{A}_{n}^{(r)},
\end{equation}
with initial conditions $L\mathcal{A}_{0}^{(r)}=a$, $L\mathcal{A}_{1}^{(r)}=b$ and $L\mathcal{A}_{3}^{(r)}=pb+qa+r$, where $a$ and $b$ are arbitrary real numbers.

The solutions of the equation $x^{3}-(1+p)x^{2}-(q-p)x+q=0$ associated with the recurrence relation (\ref{e1}) are
$$
\varphi=1,\ \psi_{1}=\frac{p+\sqrt{p^{2}+4q}}{2},\ \psi_{2}=\frac{p-\sqrt{p^{2}+4q}}{2}.
$$ 
Note that 
\begin{equation}\label{e2}
\psi_{1}+\psi_{2}=p,\  \psi_{1}\psi_{2}=-q,\ \psi_{1}-\psi_{2}=\sqrt{p^{2}+4q}.
\end{equation}
So the Binet formula for the generalized Leonardo-Alwyn sequence is given by
\begin{equation}\label{bin}
L\mathcal{A}_{n}^{(r)}=\frac{1}{1-p-q}\left[r+\left(\frac{\Phi_{1}\psi_{1}^{n}-\Phi_{2}\psi_{2}^{n}}{\psi_{1}-\psi_{2}}\right)\right],
\end{equation}
where $\Phi_{j}=\Phi(\psi_{j})=((1-p-q)a-r)\psi_{j}+(1-p-q)b+(p^{2}+pq-p)a+(p-1)r$ (with $j=1,2$).

Generalized Leonardo-Alwyn sequence is a generalization of some sequences such as the Leonardo, John-Edouard and Ernst sequences. These sequences have applications in number theory, geometry and algebra. Hence, these sequences have been studied by Alp and Ko\c{c}er \cite{Al,Al1}, Catarino and Borges \cite{Ca1,Ca2}, G\"okba\c{s} \cite{Go}, Shannon \cite{Sh} and Soykan \cite{So1,So2}.

\begin{remark}
If $r=1$, some particular cases of Eq. (\ref{e1}) are: 
\begin{itemize}
\item If $a=b=1$ and $p=q=1$, we get the Leonardo sequence $$Le_{n+2}=Le_{n+1}+Le_{n}+1,\ Le_{0}=Le_{1}=1.$$ Note that $Le_{n}=2F_{n+1}-1$, where $F_{n}$ is the $n$-th Fibonacci number.
\item If $a=b=1$, $p=1$ and $q=2$, we have Ernst sequence $$Er_{n+2}=Er_{n+1}+2Er_{n}+1,\ Er_{0}=Er_{1}=1.$$ Further, we can write $Er_{n}=\frac{1}{2}\left[3J_{n+1}-1\right]$, where $J_{n}$ is the $n$-th Jacobsthal number.
\end{itemize}
\end{remark}

Recently, \"Ozdemir \cite{Oz} defined a new generalization of complex, hyperbolic and dual numbers different from above generalizations. In this generalization, the author gave a system of such numbers that consists of all three number systems together. This set was called hybrid numbers, denoted by $\mathbb{K}$, is defined as
\begin{equation}\label{Hy}
\mathbb{K}=\left\lbrace Z=a+bi+c\varepsilon+dh:\ \begin{array}{c} i^{2}=-1,\ \varepsilon^{2}=0,\ h^{2}=1,\\ 
 ih=-hi=\varepsilon+i \end{array}\right\rbrace,
\end{equation}
where $a$, $b$, $c$ and $d$ are real numbers.

Two hybrid numbers are equal if all their components are equal, one by one. The sum of two hybrid numbers is defined by summing their components. Addition operation in the hybrid numbers is both commutative and associative. Zero is the null element. With respect to the addition operation, the inverse element of $Z$ is $-Z$, which is defined as having all the components of $Z$ changed in their signs. This implies that, $(\mathbb{K}, +)$ is an Abelian group.

The hybridian product is obtained by distributing the terms on the right as in ordinary algebra, preserving that the multiplication order of the units and then writing the values of followings replacing each product of units by the equalities $i^{2}=-1,\ \varepsilon^{2}=0,\ h^{2}=1$ and $ih=-hi=\varepsilon+i$. Using these equalities we can find the product of any two hybrid units. For example, let's find $i\varepsilon$. For this, let's multiply $ih=\varepsilon+i$ by $i$ from the left. Thus, we get $i\varepsilon=1-h$. If we continue in a similar way, we get the following multiplication table.

\begin{table}[ht] 
\caption{The multiplication table for the basis of $\mathbb{K}$.} 
\centering      
\begin{tabular}{lllll}
\hline
$\times $ & $1$ & $i$ & $\varepsilon$ & $h$  \\ \hline
$1$ & $1$ & $i$ & $\varepsilon$ & $h$  
\\ 
$i$ & $i$ & $-1$ & $1-h$ & $\varepsilon+i$
\\ 
$\varepsilon$ & $\varepsilon$ & $1+h$ & $0$ & $-\varepsilon$
\\ 
$h$ & $h$ & $-(\varepsilon+i)$ & $\varepsilon$ & $1$ 
\\ \hline
\end{tabular}
\label{table:1}  
\end{table}
The table \ref{table:1} shows us that the multiplication operation in the hybrid numbers is not commutative. But it has the property of associativity. The conjugate of a hybrid number $Z=a+bi+c\varepsilon+dh$, denoted by $\overline{Z}$, is defined as $\overline{Z}=a-bi-c\varepsilon-dh$ as in the quaternions. The conjugate of the sum of hybrid numbers is equal to the sum of their conjugates. Also, according to the hybridian product, we have $Z\overline{Z}=\overline{Z}Z$. The real number $$\mathcal{C}(Z)=Z\overline{Z}=a^{2}+(b-c)^{2}-c^{2}-d^{2}$$ is called the character of the hybrid number $Z=a+bi+c\varepsilon+dh$. The real number $\sqrt{|\mathcal{C}(Z)|}$ will be called the norm of the hybrid number $Z$ and will be denoted by $\lVert Z\rVert_{\mathbb{K}}$.

Now, we introduce the generalized Leonardo-Alwyn hybrid numbers and investigate some of their properties. We also give some applications related to the generalized Leonardo-Alwyn hybrid numbers in matrices.

\section{Generalized Leonardo-Alwyn hybrid numbers}
For $m\geq 0$, the generalized Leonardo-Alwyn hybrid numbers are defined by
\begin{equation}\label{f1}
La\mathcal{H}_{m}^{(r)}=L\mathcal{A}_{m}^{(r)}+iL\mathcal{A}_{m+1}^{(r)}+\varepsilon L\mathcal{A}_{m+2}^{(r)}+hL\mathcal{A}_{m+3}^{(r)},
\end{equation}
where $L\mathcal{A}_{m}^{(r)}$ is the $m$-th generalized Leonardo-Alwyn number, and $i$, $\varepsilon$, $h$ are hybrid units.

\begin{remark}
For $a=b=1$ and $p=q=r=1$, we obtain $\psi_{1}=\frac{1+\sqrt{5}}{2}$, $\psi_{2}=\frac{1-\sqrt{5}}{2}$, $\Phi_{1}=-2\psi_{1}$ and $\Phi_{2}=-2\psi_{2}$. Then, we obtain the hybrid Leonardo number defined by
$$
HL_{m}^{(1)}=Le_{m}+Le_{m+1}i+Le_{m+2}\varepsilon +Le_{m+3}h.
$$ (see, e.g., \cite{Al1}).
\end{remark}

\begin{theorem}
Let $m\geq 0$ be an integer, Then, we have
\begin{equation}\label{f2}
La\mathcal{H}_{m+2}^{(r)}=pLa\mathcal{H}_{m+1}^{(r)}+qLa\mathcal{H}_{m}^{(r)}+r\Psi,
\end{equation}
with $\Psi=1+i+\varepsilon+h$, $La\mathcal{H}_{0}^{(r)}=a+bi+(pb+qa+r)\varepsilon+((p^{2}+q)b+(pq+q)a+(p+1)r)h$ and $La\mathcal{H}_{1}^{(r)}=b+(pb+qa+r)i+((p^{2}+q)b+(pq+q)a+(p+1)r)\varepsilon+((p^{2}+2pq)b+(p^{2}q+pq+q^{2})a+(p^{2}+p+q+1)r)h$.
\end{theorem}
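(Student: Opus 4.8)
The plan is to reduce the entire statement to a single scalar fact about the underlying number sequence, namely that it satisfies the \emph{inhomogeneous second-order} recurrence
\begin{equation*}
L\mathcal{A}_{n+2}^{(r)} = pL\mathcal{A}_{n+1}^{(r)} + qL\mathcal{A}_{n}^{(r)} + r, \qquad n\geq 0. \tag{$\ast$}
\end{equation*}
Once ($\ast$) is available, the hybrid recurrence (\ref{f2}) will follow by linearity alone, since the units $1,i,\varepsilon,h$ are constants independent of the index $m$; in particular the noncommutativity of the hybridian product plays no role here, because $p$, $q$, $r$ are real scalars.

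To establish ($\ast$) I would exploit the factorization $x^{3}-(1+p)x^{2}-(q-p)x+q=(x-1)(x^{2}-px-q)$ of the characteristic polynomial of (\ref{e1}); the quadratic factor carries the roots $\psi_{1},\psi_{2}$ of (\ref{e2}), while the root $\varphi=1$ accounts for the constant term in the Binet formula (\ref{bin}). Concretely, I would set $v_{n}:=L\mathcal{A}_{n+2}^{(r)}-pL\mathcal{A}_{n+1}^{(r)}-qL\mathcal{A}_{n}^{(r)}$, substitute (\ref{e1}) into $v_{n+1}$, and watch the coefficients collapse to $v_{n+1}=v_{n}$. Thus $v_{n}$ is constant, and evaluating at $n=0$ with $L\mathcal{A}_{0}^{(r)}=a$, $L\mathcal{A}_{1}^{(r)}=b$, $L\mathcal{A}_{2}^{(r)}=pb+qa+r$ gives $v_{0}=r$, which is exactly ($\ast$). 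As a cross-check one can read ($\ast$) directly off (\ref{bin}), since its constant part $\tfrac{r}{1-p-q}$ is precisely the fixed point of $t\mapsto (p+q)t+r$.

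Finally I would assemble the hybrid identity. Writing $La\mathcal{H}_{m}^{(r)}=L\mathcal{A}_{m}^{(r)}+iL\mathcal{A}_{m+1}^{(r)}+\varepsilon L\mathcal{A}_{m+2}^{(r)}+hL\mathcal{A}_{m+3}^{(r)}$ and noting $\Psi=1+i+\varepsilon+h$, I would form $pLa\mathcal{H}_{m+1}^{(r)}+qLa\mathcal{H}_{m}^{(r)}+r\Psi$ and regroup by the four units. The coefficient of the $k$-th unit is then $pL\mathcal{A}_{m+1+k}^{(r)}+qL\mathcal{A}_{m+k}^{(r)}+r$ for $k=0,1,2,3$, which by ($\ast$) equals $L\mathcal{A}_{m+2+k}^{(r)}$, i.e. the corresponding component of $La\mathcal{H}_{m+2}^{(r)}$; this proves (\ref{f2}). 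The stated forms of $La\mathcal{H}_{0}^{(r)}$ and $La\mathcal{H}_{1}^{(r)}$ are then obtained by computing $L\mathcal{A}_{0}^{(r)},\ldots,L\mathcal{A}_{4}^{(r)}$ from ($\ast$) and substituting into (\ref{f1}). The only genuine content lies in deriving ($\ast$); everything afterwards is linear bookkeeping, and the main practical obstacle I anticipate is simply keeping the algebra in the initial-condition substitution error-free.
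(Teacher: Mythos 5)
Your proposal is correct and takes essentially the same route as the paper: both reduce the hybrid recurrence to the scalar identity $L\mathcal{A}_{n+2}^{(r)}=pL\mathcal{A}_{n+1}^{(r)}+qL\mathcal{A}_{n}^{(r)}+r$ applied in each of the four unit-components, the regrouping being valid because the real scalars $p,q,r$ commute with the hybrid units. The one difference is to your credit: you actually derive that scalar relation from the third-order recurrence (\ref{e1}) via the constancy of $v_{n}$ and the evaluation $v_{0}=r$, a step the paper uses silently (it cites (\ref{e1}) but substitutes the second-order inhomogeneous form without justification, and additionally verifies $m=0$ by a redundant direct computation).
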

\begin{proof}
If $m=0$ and $\Psi=1+i+\varepsilon+h$, then we get
\begin{align*}
pLa\mathcal{H}_{1}^{(r)}+qLa\mathcal{H}_{0}^{(r)}+r\Psi&=pb+[p^{2}b+pqa+pr]i\\
&\ \ + [p(p^{2}+q)b+(p^{2}q+pq)a+(p^{2}+p)r]\varepsilon\\
&\ \ + [p^{2}(p+2p)b+p(p^{2}q+pq+q^{2})a+p(p^{2}+p+q+1)r]h\\
&\ \ + qa+qbi+[pqb+q^{2}a+qr]\varepsilon\\
&\ \ + [(p^{2}q+q^{2})b+(pq^{2}+q^{2})a+(pq+q)r]h\\
&\ \ + r+ri+r\varepsilon +rh\\
&=L\mathcal{A}_{2}^{(r)}+iL\mathcal{A}_{3}^{(r)}+\varepsilon L\mathcal{A}_{4}^{(r)}+hL\mathcal{A}_{5}^{(r)}\\
&=La\mathcal{H}_{2}^{(r)}.
\end{align*}
If $m\geq 0$ and using Eqs. (\ref{e1}) and (\ref{f1}), we obtain
\begin{align*}
La\mathcal{H}_{m+2}^{(r)}&=L\mathcal{A}_{m+2}^{(r)}+iL\mathcal{A}_{m+3}^{(r)}+\varepsilon L\mathcal{A}_{m+4}^{(r)}+hL\mathcal{A}_{m+5}^{(r)}\\
&=pL\mathcal{A}_{m+1}^{(r)}+qL\mathcal{A}_{m}^{(r)}+r+ \left[pL\mathcal{A}_{m+2}^{(r)}+qL\mathcal{A}_{m+1}^{(r)}+r\right]i\\
&\ \ + \left[pL\mathcal{A}_{m+3}^{(r)}+qL\mathcal{A}_{m+2}^{(r)}+r\right]\varepsilon+ \left[pL\mathcal{A}_{m+4}^{(r)}+qL\mathcal{A}_{m+3}^{(r)}+r\right]h\\
&=p\left[L\mathcal{A}_{m+1}^{(r)}+iL\mathcal{A}_{m+2}^{(r)}+\varepsilon L\mathcal{A}_{m+3}^{(r)}+hL\mathcal{A}_{m+4}^{(r)}\right]\\
&\ \ + q \left[L\mathcal{A}_{m}^{(r)}+iL\mathcal{A}_{m+1}^{(r)}+\varepsilon L\mathcal{A}_{m+2}^{(r)}+hL\mathcal{A}_{m+3}^{(r)}\right]\\
&\ \ + r+ri+r\varepsilon+rh\\
&=pLa\mathcal{H}_{m+1}^{(r)}+qLa\mathcal{H}_{m}^{(r)}+r\Psi,
\end{align*}
where $\Psi=1+i+\varepsilon+h$ which completes the proof.
\end{proof}

\begin{theorem}[Binet formula of $La\mathcal{H}_{m}^{(r)}$]\label{teo1}
For $m\geq 0$, the Binet formula for generalized Leonardo-Alwyn hybrid numbers is 
\begin{equation}\label{BinH}
La\mathcal{H}_{m}^{(r)}=\frac{1}{1-p-q}\left[r\Psi+\left(\frac{\Phi_{1}\psi_{1}^{m}\Psi_{1}-\Phi_{2}\psi_{2}^{m}\Psi_{2}}{\psi_{1}-\psi_{2}}\right)\right],
\end{equation}
where $\Psi$ as in Eq. (\ref{f2}), $\Psi_{1}=1+\psi_{1}i+\psi_{1}^{2}\varepsilon +\psi_{1}^{3}h$ and $\Psi_{2}=1+\psi_{2}i+\psi_{2}^{2}\varepsilon +\psi_{2}^{3}h$.
\end{theorem}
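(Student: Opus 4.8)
The plan is to prove the hybrid Binet formula by direct substitution. Since $La\mathcal{H}_{m}^{(r)}$ is by definition \eqref{f1} a fixed linear combination of the four consecutive scalar values $L\mathcal{A}_{m}^{(r)},L\mathcal{A}_{m+1}^{(r)},L\mathcal{A}_{m+2}^{(r)},L\mathcal{A}_{m+3}^{(r)}$ with the hybrid units $1,i,\varepsilon,h$ as coefficients, I would insert the already-established scalar Binet formula \eqref{bin} into each of these four slots and then regroup. The essential observation is that no product of two hybrid units ever arises here: each unit occurs only linearly, multiplied by a scalar, so the whole manipulation is effectively formal and reduces to factoring the scalar pieces out.

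Concretely, writing $e_{0}=1$, $e_{1}=i$, $e_{2}=\varepsilon$, $e_{3}=h$, I would begin from $La\mathcal{H}_{m}^{(r)}=\sum_{k=0}^{3}e_{k}L\mathcal{A}_{m+k}^{(r)}$ and replace each $L\mathcal{A}_{m+k}^{(r)}$ using \eqref{bin}. Factoring out the common $\tfrac{1}{1-p-q}$ and the common $\tfrac{1}{\psi_{1}-\psi_{2}}$, and using $\psi_{j}^{m+k}=\psi_{j}^{m}\psi_{j}^{k}$, the expression splits into an $r$-contribution $r\sum_{k=0}^{3}e_{k}$ and a geometric contribution $\tfrac{1}{\psi_{1}-\psi_{2}}\big(\Phi_{1}\psi_{1}^{m}\sum_{k=0}^{3}e_{k}\psi_{1}^{k}-\Phi_{2}\psi_{2}^{m}\sum_{k=0}^{3}e_{k}\psi_{2}^{k}\big)$.

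The last step is to identify the three coefficient sums with the hybrid numbers named in the statement: $\sum_{k=0}^{3}e_{k}=1+i+\varepsilon+h=\Psi$, and $\sum_{k=0}^{3}e_{k}\psi_{j}^{k}=1+\psi_{j}i+\psi_{j}^{2}\varepsilon+\psi_{j}^{3}h=\Psi_{j}$ for $j=1,2$. Substituting these back yields \eqref{BinH} at once.

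As for the main obstacle, there is no genuine combinatorial or algebraic difficulty; the only point requiring care is that the scalars $r$, $\Phi_{j}$, $\psi_{j}$ and $\tfrac{1}{\psi_{1}-\psi_{2}}$ (which may be complex when $p^{2}+4q<0$) must be treated as central relative to the hybrid basis, so that pulling them past the units $e_{k}$ in the regrouping step is legitimate. Because $La\mathcal{H}_{m}^{(r)}$ contains only scalar multiples of single units and never a product of two units, this centrality is all that is needed, and hence the regrouping—and with it the whole argument—goes through without the non-commutativity of the multiplication table ever intervening.
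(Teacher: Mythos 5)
Your proposal is correct and takes essentially the same route as the paper's own proof: substitute the scalar Binet formula (\ref{bin}) into each of the four components of the definition (\ref{f1}), factor out $\tfrac{1}{1-p-q}$ and $\tfrac{1}{\psi_{1}-\psi_{2}}$, and identify the resulting coefficient sums as $\Psi$, $\Psi_{1}$ and $\Psi_{2}$. Your explicit observation that the scalars (including the possibly complex $\psi_{j}$ and $\Phi_{j}$ when $p^{2}+4q<0$) must be treated as central with respect to the hybrid units is a point the paper leaves implicit, and it is the right justification for the regrouping step.
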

\begin{proof}
Using Eqs. (\ref{f1}) and (\ref{bin}), we can write the following expression
\begin{align*}
La\mathcal{H}_{m}^{(r)}&=L\mathcal{A}_{m}^{(r)}+iL\mathcal{A}_{m+1}^{(r)}+\varepsilon L\mathcal{A}_{m+2}^{(r)}+hL\mathcal{A}_{m+3}^{(r)}\\
&=\frac{1}{1-p-q}\left[r+\left(\frac{\Phi_{1}\psi_{1}^{m}-\Phi_{2}\psi_{2}^{m}}{\psi_{1}-\psi_{2}}\right)\right]\\
&\ \ + \frac{1}{1-p-q}\left[r+\left(\frac{\Phi_{1}\psi_{1}^{m+1}-\Phi_{2}\psi_{2}^{m+1}}{\psi_{1}-\psi_{2}}\right)\right]i\\
&\ \ + \frac{1}{1-p-q}\left[r+\left(\frac{\Phi_{1}\psi_{1}^{m+2}-\Phi_{2}\psi_{2}^{m+2}}{\psi_{1}-\psi_{2}}\right)\right]\varepsilon \\
&\ \ + \frac{1}{1-p-q}\left[r+\left(\frac{\Phi_{1}\psi_{1}^{m+3}-\Phi_{2}\psi_{2}^{m+3}}{\psi_{1}-\psi_{2}}\right)\right]h\\
&=\frac{1}{1-p-q}\left[r(1+i+\varepsilon+h)+\left(\frac{\Phi_{1}\psi_{1}^{m}\Psi_{1}-\Phi_{2}\psi_{2}^{m}\Psi_{2}}{\psi_{1}-\psi_{2}}\right)\right],
\end{align*}
where $\Psi_{1}=1+\psi_{1}i+\psi_{1}^{2}\varepsilon +\psi_{1}^{3}h$ and $\Psi_{2}=1+\psi_{2}i+\psi_{2}^{2}\varepsilon +\psi_{2}^{3}h$.
\end{proof}

\begin{remark}
For $a=b=1$ and $p=q=r=1$, we obtain $\psi_{1}=\frac{1+\sqrt{5}}{2}$, $\psi_{2}=\frac{1-\sqrt{5}}{2}$, $\Phi_{1}=-2\psi_{1}$ and $\Phi_{2}=-2\psi_{2}$. Then, the Binet formula for the Leonardo hybrid number has the form 
$$
La\mathcal{H}_{m}^{(1)}=2\left(\frac{\psi_{1}^{m+1}\Psi_{1}-\psi_{2}^{m+1}\Psi_{2}}{\psi_{1}-\psi_{2}}\right)-(1+i+\varepsilon+h).
$$
\end{remark}

\begin{proposition}
Let $m\geq 0$ be an integer. Then, the character of the hybrid number $La\mathcal{H}_{m}^{(r)}$ is given by
\begin{equation}
La\mathcal{H}_{m}^{(r)}\overline{La\mathcal{H}_{m}^{(r)}}=\frac{1}{\rho^{2}}\left\lbrace \begin{array}{c} 
2r(1-q-pq)H_{m}-2r(p^{2}+p+q)H_{m+1}\\
+ (1-p^{2}q^{2})H_{m}^{2}+(1-2p-(p^{2}+q)^{2})H_{m+1}^{2}\\
- 2q(1+pq+p^{3})H_{m+1}H_{m}-r^{2}\end{array}\right\rbrace ,
\end{equation}
where $\rho=1-p-q$ and $H_{m}=\frac{\Phi_{1}\psi_{1}^{m}-\Phi_{2}\psi_{2}^{m}}{\psi_{1}-\psi_{2}}$.
\end{proposition}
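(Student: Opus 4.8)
The plan is to compute the character directly from its definition $\mathcal{C}(Z)=Z\overline{Z}=a^{2}+(b-c)^{2}-c^{2}-d^{2}$, identifying the four real components of $La\mathcal{H}_{m}^{(r)}$ with the consecutive numbers $a=L\mathcal{A}_{m}^{(r)}$, $b=L\mathcal{A}_{m+1}^{(r)}$, $c=L\mathcal{A}_{m+2}^{(r)}$ and $d=L\mathcal{A}_{m+3}^{(r)}$. Thus the quantity to evaluate is
\[
La\mathcal{H}_{m}^{(r)}\overline{La\mathcal{H}_{m}^{(r)}}=(L\mathcal{A}_{m}^{(r)})^{2}+(L\mathcal{A}_{m+1}^{(r)}-L\mathcal{A}_{m+2}^{(r)})^{2}-(L\mathcal{A}_{m+2}^{(r)})^{2}-(L\mathcal{A}_{m+3}^{(r)})^{2}.
\]

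First I would rewrite each term using the Binet formula (\ref{bin}) in the compact form $L\mathcal{A}_{n}^{(r)}=\frac{1}{\rho}(r+H_{n})$, where $\rho=1-p-q$ and $H_{n}=\frac{\Phi_{1}\psi_{1}^{n}-\Phi_{2}\psi_{2}^{n}}{\psi_{1}-\psi_{2}}$. Since $\psi_{1},\psi_{2}$ are the roots of $x^{2}-px-q=0$ by (\ref{e2}), the sequence $H_{n}$ satisfies the second-order recurrence $H_{n+2}=pH_{n+1}+qH_{n}$; this is what lets me reduce $H_{m+2}$ and $H_{m+3}$ to the two ``coordinates'' $H_{m}$ and $H_{m+1}$, namely $H_{m+2}=pH_{m+1}+qH_{m}$ and $H_{m+3}=(p^{2}+q)H_{m+1}+pqH_{m}$.

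Next I would substitute these expressions, so that after multiplying through by $\rho^{2}$ every summand becomes a polynomial in $r$, $H_{m}$ and $H_{m+1}$, and then collect coefficients monomial by monomial. The $r^{2}$-contributions combine to $-r^{2}$; the terms linear in $H_{m}$ and $H_{m+1}$ yield $2r(1-q-pq)H_{m}$ and $-2r(p^{2}+p+q)H_{m+1}$ respectively; and the quadratic part produces the coefficient $(1-p^{2}q^{2})$ of $H_{m}^{2}$, the coefficient $(1-2p-(p^{2}+q)^{2})$ of $H_{m+1}^{2}$, and $-2q(1+pq+p^{3})$ of the cross term $H_{m+1}H_{m}$. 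Assembling these pieces gives exactly the claimed identity.

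The main obstacle is purely the algebraic bookkeeping: each of the four squared quantities expands into as many as six monomials in $\{r,H_{m},H_{m+1}\}$, and the stated coefficients emerge only after nontrivial cancellations, most notably among the $q^{2}H_{m}^{2}$ terms and in the collapse of the cross term into $-2q(1+pq+p^{3})$. To keep these cancellations transparent I would group the six monomial types separately across all four squares, rather than expanding one square at a time.
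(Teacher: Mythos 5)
Your proposal is correct and follows essentially the same route as the paper: both apply the character formula $\mathcal{C}(Z)=a^{2}+(b-c)^{2}-c^{2}-d^{2}$ to the components of $La\mathcal{H}_{m}^{(r)}$, substitute the Binet form $L\mathcal{A}_{n}^{(r)}=\frac{1}{\rho}(r+H_{n})$, and reduce via $H_{n+2}=pH_{n+1}+qH_{n}$ before collecting coefficients, where the paper condenses this into ``some elementary calculations.'' Your expansions and claimed cancellations (e.g.\ the $q^{2}H_{m}^{2}$ cancellation and the collapse of the cross term to $-2q(1+pq+p^{3})$) all check out.
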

\begin{proof}
By definition $\mathcal{C}\left(La\mathcal{H}_{m}^{(r)}\right)=La\mathcal{H}_{m}^{(r)}\overline{La\mathcal{H}_{m}^{(r)}}$ and Eq. (\ref{f1}), we have
$$
\mathcal{C}\left(La\mathcal{H}_{m}^{(r)}\right)=\left(L\mathcal{A}_{m}^{(r)}\right)^{2}+\left(L\mathcal{A}_{m+1}^{(r)}-L\mathcal{A}_{m+2}^{(r)}\right)^{2}-\left(L\mathcal{A}_{m+2}^{(r)}\right)^{2}-\left(L\mathcal{A}_{m+3}^{(r)}\right)^{2}.
$$
By Eq. (\ref{e2}), $\rho=1-p-q$ and some elementary calculations we obtain
\begin{align*}
\rho^{2}\mathcal{C}\left(La\mathcal{H}_{m}^{(r)}\right)&=\left[r+\left(\frac{\Phi_{1}\psi_{1}^{m}-\Phi_{2}\psi_{2}^{m}}{\psi_{1}-\psi_{2}}\right)\right]^{2}+ \left[\frac{\Phi_{1}\psi_{1}^{m+1}(1-\psi_{1})-\Phi_{2}\psi_{2}^{m+1}(1-\psi_{2})}{\psi_{1}-\psi_{2}}\right]^{2}\\
&\ \ - \left[r+\left(\frac{\Phi_{1}\psi_{1}^{m+2}-\Phi_{2}\psi_{2}^{m+2}}{\psi_{1}-\psi_{2}}\right)\right]^{2}- \left[r+\left(\frac{\Phi_{1}\psi_{1}^{m+3}-\Phi_{2}\psi_{2}^{m+3}}{\psi_{1}-\psi_{2}}\right)\right]^{2}.
\end{align*}
Then, we have
\begin{align*}
\rho^{2}\mathcal{C}\left(La\mathcal{H}_{m}^{(r)}\right)&=2r(1-q-pq)H_{m}-2r(p^{2}+p+q)H_{m+1}\\
&\ \ + (1-p^{2}q^{2})H_{m}^{2}+(1-2p-(p^{2}+q)^{2})H_{m+1}^{2}\\
&\ \ - 2q(1+pq+p^{3})H_{m+1}H_{m}-r^{2},
\end{align*}
where $H_{m}=\frac{\Phi_{1}\psi_{1}^{m}-\Phi_{2}\psi_{2}^{m}}{\psi_{1}-\psi_{2}}$ and $H_{m+2}=pH_{m+1}+qH_{m}$.
\end{proof}

\begin{theorem}
The generating function for the generalized Leonardo-Alwyn hybrid number sequence $La\mathcal{H}_{m}^{(r)}$ is
\begin{equation}
\sum_{m=0}^{\infty}La\mathcal{H}_{m}^{(r)}t^{m}=\frac{\left\lbrace \begin{array}{c} La\mathcal{H}_{0}^{(r)}+\left(La\mathcal{H}_{1}^{(r)}-(1+p)La\mathcal{H}_{0}^{(r)}\right)t\\ \left(La\mathcal{H}_{2}^{(r)}-(1+p)La\mathcal{H}_{1}^{(r)}-(q-p)La\mathcal{H}_{0}^{(r)}\right)t ^{2}\end{array} \right\rbrace}{1-(1+p)t-(q-p)t^{2}+qt^{3}}.
\end{equation}
\end{theorem}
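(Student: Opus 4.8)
The plan is to use the standard generating-function technique for linearly recurrent sequences. The key preliminary observation is that the hybrid numbers $La\mathcal{H}_{m}^{(r)}$ satisfy the homogeneous third-order recurrence whose characteristic polynomial is exactly the denominator appearing on the right-hand side. Indeed, each of the four components of $La\mathcal{H}_{m}^{(r)}$ in (\ref{f1}) is a shift of the scalar sequence $L\mathcal{A}_{m}^{(r)}$, which obeys (\ref{e1}); since the hybrid units $1,i,\varepsilon,h$ are constants independent of $m$, forming the same linear combination of four consecutive instances of (\ref{e1}) shows that
$$
La\mathcal{H}_{m+3}^{(r)}=(1+p)La\mathcal{H}_{m+2}^{(r)}+(q-p)La\mathcal{H}_{m+1}^{(r)}-qLa\mathcal{H}_{m}^{(r)}
$$
for all $m\geq 0$. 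Equivalently, this relation follows by subtracting two consecutive copies of (\ref{f2}) so as to eliminate the inhomogeneous term $r\Psi$.

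Next I would set $g(t)=\sum_{m=0}^{\infty}La\mathcal{H}_{m}^{(r)}t^{m}$ and form the product $\bigl(1-(1+p)t-(q-p)t^{2}+qt^{3}\bigr)g(t)$. Expanding each of the four terms as a power series and shifting the summation index so that every sum is indexed by the common power $t^{m}$, the coefficient of $t^{m}$ for each $m\geq 3$ becomes
$$
La\mathcal{H}_{m}^{(r)}-(1+p)La\mathcal{H}_{m-1}^{(r)}-(q-p)La\mathcal{H}_{m-2}^{(r)}+qLa\mathcal{H}_{m-3}^{(r)},
$$
which is identically zero by the recurrence established above (applied with $m$ replaced by $m-3$). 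Thus the infinite tail collapses, and only the contributions from $m=0,1,2$ survive.

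Collecting these low-order terms yields the constant term $La\mathcal{H}_{0}^{(r)}$, the coefficient of $t$ equal to $La\mathcal{H}_{1}^{(r)}-(1+p)La\mathcal{H}_{0}^{(r)}$, and the coefficient of $t^{2}$ equal to $La\mathcal{H}_{2}^{(r)}-(1+p)La\mathcal{H}_{1}^{(r)}-(q-p)La\mathcal{H}_{0}^{(r)}$, which is precisely the numerator claimed. Dividing by the denominator then gives the stated closed form.

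The computation is entirely routine, and there is no genuine obstacle. The only points demanding care are the bookkeeping of the index shifts in the three displaced sums and the observation that the noncommutativity of the hybrid units plays no role here: $t$ is a commuting formal variable and the recurrence acts componentwise on the real coefficients, so the units simply ride along as fixed scalars. One could alternatively avoid re-deriving the recurrence and work directly from the inhomogeneous relation (\ref{f2}), but then the term $r\Psi$ contributes an extra geometric-type piece of the form $r\Psi\,t^{2}/(1-t)$ that must be reconciled with the given numerator; using the homogeneous third-order form is cleaner and matches the cubic denominator directly.
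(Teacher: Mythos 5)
Your proposal is correct and follows essentially the same route as the paper: multiply the series $g(t)$ by $1-(1+p)t-(q-p)t^{2}+qt^{3}$ and use the third-order homogeneous recurrence to collapse all coefficients beyond $t^{2}$. The only difference is cosmetic --- you explicitly justify that $La\mathcal{H}_{m}^{(r)}$ satisfies the homogeneous recurrence (a step the paper leaves implicit), which is a welcome clarification but not a different method.
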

\begin{proof}
Let $g(t)=\sum_{m=0}^{\infty}La\mathcal{H}_{m}^{(r)}t^{m}$. Then, we obtain
\begin{equation}\label{gen}
g(t)=La\mathcal{H}_{0}^{(r)}+La\mathcal{H}_{1}^{(r)}t+La\mathcal{H}_{2}^{(r)}t^{2}+ \cdots .
\end{equation}
Multiply Eq. (\ref{gen}) on both sides by $-(1+p)t$, $-(q-p)t^{2}$ and $+qt^{3}$. Then, we have
\begin{equation}\label{gen1}
-(1+p)tg(t)=-(1+p)La\mathcal{H}_{0}^{(r)}t-(1+p)La\mathcal{H}_{1}^{(r)}t^{2}-(1+p)La\mathcal{H}_{2}^{(r)}t^{3}+ \cdots .
\end{equation}
\begin{equation}\label{gen2}
-(q-p)t^{2}g(t)=-(q-p)La\mathcal{H}_{0}^{(r)}t^{2}-(q-p)La\mathcal{H}_{1}^{(r)}t^{3}-(q-p)La\mathcal{H}_{2}^{(r)}t^{4}+ \cdots .
\end{equation}
\begin{equation}\label{gen3}
qt^{3}g(t)=qLa\mathcal{H}_{0}^{(r)}t^{3}+qLa\mathcal{H}_{1}^{(r)}t^{4}+qLa\mathcal{H}_{2}^{(r)}t^{5}+ \cdots .
\end{equation}
By adding Eqs. (\ref{gen}), (\ref{gen1}), (\ref{gen2}) and (\ref{gen3}), we have
$$
g(t)=\frac{\left\lbrace \begin{array}{c} La\mathcal{H}_{0}^{(r)}+\left(La\mathcal{H}_{1}^{(r)}-(1+p)La\mathcal{H}_{0}^{(r)}\right)t\\ \left(La\mathcal{H}_{2}^{(r)}-(1+p)La\mathcal{H}_{1}^{(r)}-(q-p)La\mathcal{H}_{0}^{(r)}\right)t ^{2}\end{array} \right\rbrace}{1-(1+p)t-(q-p)t^{2}+qt^{3}}.
$$
\end{proof}

\begin{remark}
For $a=b=1$ and $p=q=r=1$. The generating function for the hybrid Leonardo numbers $HLe_{m}$
$$
g(t)=\frac{1+i+3\varepsilon+5h -(1-i+\varepsilon+h)t+(1-i-\varepsilon-3h)t^{2}}{1-2t+t^{3}}
$$ (see, e.g., \cite{Al1}).
\end{remark}

\begin{theorem}
Let $m\geq 0$ be an integer. Then
$$
\sum_{j=0}^{m}La\mathcal{H}_{j}^{(r)}=\frac{1}{\rho}\left[r\Psi(m+2p+q)\right]+(1-p)La\mathcal{H}_{0}^{(r)}+La\mathcal{H}_{1}^{(r)}-La\mathcal{H}_{m+1}^{(r)}-(p+q)La\mathcal{H}_{m}^{(r)}.
$$
\end{theorem}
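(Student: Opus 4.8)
The plan is to derive the closed form directly from the second-order recurrence proved in the first theorem, namely $La\mathcal{H}_{j+2}^{(r)}=pLa\mathcal{H}_{j+1}^{(r)}+qLa\mathcal{H}_{j}^{(r)}+r\Psi$ in Eq.~(\ref{f2}), by summing it over $j$ and collecting the telescoping boundary contributions. Writing $S_{m}=\sum_{j=0}^{m}La\mathcal{H}_{j}^{(r)}$, I would add Eq.~(\ref{f2}) over $j=0,1,\dots,m$; this places $(m+1)$ copies of the fixed hybrid element $r\Psi$ on one side and three index-shifted partial sums on the other, so that everything is expressible through $S_{m}$ once the shifts are absorbed.

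The next step is the re-indexing of the shifted sums back to $S_{m}$ plus edge corrections. Concretely, $\sum_{j=0}^{m}La\mathcal{H}_{j+2}^{(r)}=S_{m}+La\mathcal{H}_{m+1}^{(r)}+La\mathcal{H}_{m+2}^{(r)}-La\mathcal{H}_{0}^{(r)}-La\mathcal{H}_{1}^{(r)}$ and $\sum_{j=0}^{m}La\mathcal{H}_{j+1}^{(r)}=S_{m}+La\mathcal{H}_{m+1}^{(r)}-La\mathcal{H}_{0}^{(r)}$, while the remaining sum is $S_{m}$ itself. Substituting these into the summed recurrence gives a single linear equation whose only unknown is $S_{m}$, and whose coefficient of $S_{m}$ is exactly $1-p-q=\rho$; this is where the factor $1/\rho$ in the statement originates, and it is legitimate to divide by it since $\rho\neq 0$ is already implicit in the Binet formula (\ref{bin}). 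Solving for $S_{m}$ produces a closed form in which $La\mathcal{H}_{m+2}^{(r)}$ still appears, so I would apply Eq.~(\ref{f2}) once more at $j=m$, replacing $La\mathcal{H}_{m+2}^{(r)}$ by $pLa\mathcal{H}_{m+1}^{(r)}+qLa\mathcal{H}_{m}^{(r)}+r\Psi$, and then regroup the $r\Psi$-, $La\mathcal{H}_{m+1}^{(r)}$- and $La\mathcal{H}_{m}^{(r)}$-terms to reach the asserted expression.

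A point worth recording is that the whole computation is purely additive: the sum involves only addition of hybrid numbers together with multiplication by the real scalars $p,q$ and by the single fixed element $r\Psi$. Consequently the noncommutativity encoded in Table~\ref{table:1} never intervenes, and the identity may equivalently be checked coordinate by coordinate in the four components $1,i,\varepsilon,h$ against the scalar formula (\ref{bin}) for $L\mathcal{A}_{n}^{(r)}$. The main obstacle is therefore not conceptual but careful bookkeeping: one must track the boundary terms $La\mathcal{H}_{0}^{(r)}$, $La\mathcal{H}_{1}^{(r)}$, $La\mathcal{H}_{m+1}^{(r)}$, $La\mathcal{H}_{m+2}^{(r)}$ without index or sign slips and correctly tally the $(m+1)$ contributions of $r\Psi$ against the $m$-linear constant in the final formula. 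As a safeguard I would verify the base case $m=0$ explicitly using the values of $La\mathcal{H}_{0}^{(r)}$ and $La\mathcal{H}_{1}^{(r)}$ from the first theorem, or, more robustly, recast the argument as an induction on $m$ in which Eq.~(\ref{f2}) supplies the inductive step, so that the two surviving boundary terms and the linear-in-$m$ constant term are seen to propagate consistently.
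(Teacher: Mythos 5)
Your strategy is sound and your bookkeeping is accurate as far as it goes, but the final step --- ``regroup \dots{} to reach the asserted expression'' --- cannot be carried out, because the expression your computation actually produces is not the one asserted. Summing the recurrence $La\mathcal{H}_{j+2}^{(r)}=pLa\mathcal{H}_{j+1}^{(r)}+qLa\mathcal{H}_{j}^{(r)}+r\Psi$ over $j=0,\dots,m$ and re-indexing exactly as you describe gives, with $S_{m}=\sum_{j=0}^{m}La\mathcal{H}_{j}^{(r)}$,
\begin{equation*}
\rho S_{m}=(m+1)r\Psi+(1-p)La\mathcal{H}_{0}^{(r)}+La\mathcal{H}_{1}^{(r)}+(p-1)La\mathcal{H}_{m+1}^{(r)}-La\mathcal{H}_{m+2}^{(r)},
\end{equation*}
and eliminating $La\mathcal{H}_{m+2}^{(r)}$ by one more application of the recurrence, as you plan, yields
\begin{equation*}
S_{m}=\frac{1}{\rho}\left[m\,r\Psi+(1-p)La\mathcal{H}_{0}^{(r)}+La\mathcal{H}_{1}^{(r)}-La\mathcal{H}_{m+1}^{(r)}-q\,La\mathcal{H}_{m}^{(r)}\right].
\end{equation*}
Here \emph{all} boundary terms sit inside the factor $1/\rho$ and the coefficient of $La\mathcal{H}_{m}^{(r)}$ is $-q$, whereas the theorem places the boundary terms outside $1/\rho$ and uses $-(p+q)$; the difference between the two right-hand sides contains $m$-dependent terms with generically nonzero coefficients, so no regrouping can reconcile them. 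In fact the printed statement is false, and the safeguard you yourself proposed detects this: for $a=b=p=q=r=1$ (so $\rho=-1$) and $m=0$, the left side is $HLe_{0}=1+i+3\varepsilon+5h$, while the theorem's right side is $-3\Psi-2HLe_{0}=-5-5i-9\varepsilon-13h$. Your formula instead gives $HLe_{0}$, and for general $m$ it reduces to $\sum_{m=0}^{n}HLe_{m}=HLe_{n+2}-(n+2)\Psi-(2i+4\varepsilon+8h)$, which is precisely the specialization quoted in the remark immediately following the theorem.

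The source of the discrepancy lies in the paper's own proof, which proceeds differently from yours: it uses the Binet decomposition $La\mathcal{H}_{j}^{(r)}=\frac{1}{\rho}\left[r\Psi+H_{j}\right]$ and then invokes the partial-sum identity $\sum_{j=0}^{m}H_{j}=(1-p)H_{0}+H_{1}-(p+q)H_{m}-H_{m+1}$ for the homogeneous sequence $H_{m+2}=pH_{m+1}+qH_{m}$. That identity is misstated: the correct one, obtained by the same telescoping you use, is $\sum_{j=0}^{m}H_{j}=\frac{1}{\rho}\left[(1-p)H_{0}+H_{1}-H_{m+1}-qH_{m}\right]$, i.e., the factor $1/\rho$ has been dropped and $-qH_{m}$ has mutated into $-(p+q)H_{m}$, and these errors propagate into the stated theorem. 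Your direct summation of the recurrence avoids that faulty auxiliary lemma entirely; your observations that only central (real) scalars occur, so noncommutativity never intervenes, and that $\rho=1-p-q\neq0$ is already implicit in the Binet formula are both correct. Carried to completion, your argument proves the corrected identity displayed above, not the identity as printed.
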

\begin{proof}
By using Binet formula of the generalized Leonardo-Alwyn hybrid numbers $La\mathcal{H}_{m}^{(r)}=\frac{1}{\rho}\left[r\Psi+H_{m}\right]$ and $\rho=1-p-q$, we find that
\begin{align*}
\sum_{j=0}^{m}La\mathcal{H}_{j}^{(r)}&=\frac{1}{\rho}\left[r\Psi(m+1)+\sum_{j=0}^{m}H_{j}\right]\\
&=\frac{1}{\rho}\left[r\Psi(m+1)+(1-p)H_{0}+H_{1}-(p+q)H_{m}-H_{m+1}\right]\\
&=\frac{1}{\rho}\left[r\Psi(m+1)+(1-p)H_{0}-(p+q)H_{m}\right]+La\mathcal{H}_{1}^{(r)}-La\mathcal{H}_{m+1}^{(r)}.
\end{align*}
By Eqs. (\ref{e2}), (\ref{f1}) and some elementary calculation, we obtain
\begin{align*}
\sum_{j=0}^{m}La\mathcal{H}_{j}^{(r)}&=\frac{1}{\rho}\left[r\Psi(m+p)-(p+q)H_{m}\right]+(1-p)La\mathcal{H}_{0}^{(r)}+La\mathcal{H}_{1}^{(r)}-La\mathcal{H}_{m+1}^{(r)}\\
&=\frac{1}{\rho}\left[r\Psi(m+2p+q)\right]+(1-p)La\mathcal{H}_{0}^{(r)}+La\mathcal{H}_{1}^{(r)}-La\mathcal{H}_{m+1}^{(r)}-(p+q)La\mathcal{H}_{m}^{(r)}
\end{align*}
as desired.
\end{proof}

\begin{remark}
For $a=b=1$ and $p=q=r=1$. The summation formula of the hybrid Leonardo numbers $HLe_{m}$ is
$$
\sum_{m=0}^{n}HLe_{m}=HLe_{n+2}-(n+2)\Psi -(2i+4\varepsilon+8h)
$$ (see, e.g., \cite{Al1}).
\end{remark}

\begin{theorem}
The exponential generating function for the generalized Leonardo-Alwyn hybrid number $La\mathcal{H}_{m}^{(r)}$ is 
$$
\sum_{m=0}^{\infty}La\mathcal{H}_{m}^{(r)}\frac{t^{m}}{m!}=\frac{1}{1-p-q}\left[r\Psi e^{t}+\left(\frac{\Phi_{1}\Psi_{1}e^{\psi_{1}t}-\Phi_{2}\Psi_{2}e^{\psi_{1}t}}{\psi_{1}-\psi_{2}}\right)\right].
$$
\end{theorem}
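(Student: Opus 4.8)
The plan is to substitute the Binet formula established in Theorem \ref{teo1} directly into the exponential generating function and then recognize the resulting three power series as exponential functions. This reduces the problem to the elementary Maclaurin expansion of $e^{x}$, with the hybrid-number coefficients $\Psi$, $\Psi_{1}$, $\Psi_{2}$ and the scalars $\Phi_{1}$, $\Phi_{2}$ playing the role of constants that factor out of each sum.

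First I would write
\begin{align*}
\sum_{m=0}^{\infty}La\mathcal{H}_{m}^{(r)}\frac{t^{m}}{m!}
&=\sum_{m=0}^{\infty}\frac{1}{1-p-q}\left[r\Psi+\frac{\Phi_{1}\psi_{1}^{m}\Psi_{1}-\Phi_{2}\psi_{2}^{m}\Psi_{2}}{\psi_{1}-\psi_{2}}\right]\frac{t^{m}}{m!}.
\end{align*}
Since $\frac{1}{1-p-q}$, $\frac{1}{\psi_{1}-\psi_{2}}$, and the hybrid numbers $\Psi$, $\Phi_{1}\Psi_{1}$, $\Phi_{2}\Psi_{2}$ do not depend on the summation index $m$, I would pull them outside and split the single series into three independent series, regrouping the powers as $\psi_{j}^{m}t^{m}=(\psi_{j}t)^{m}$.

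The key step is then to identify each series with an exponential: using $\sum_{m=0}^{\infty}\frac{t^{m}}{m!}=e^{t}$ and $\sum_{m=0}^{\infty}\frac{(\psi_{j}t)^{m}}{m!}=e^{\psi_{j}t}$ for $j=1,2$, the expression collapses to
\begin{equation*}
\frac{1}{1-p-q}\left[r\Psi e^{t}+\frac{\Phi_{1}\Psi_{1}e^{\psi_{1}t}-\Phi_{2}\Psi_{2}e^{\psi_{2}t}}{\psi_{1}-\psi_{2}}\right],
\end{equation*}
which is the claimed formula. (I note in passing that the second exponent in the statement should read $e^{\psi_{2}t}$ rather than $e^{\psi_{1}t}$, evidently a typo.)

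I do not expect any serious obstacle here, since the only analytic issue is the legitimacy of rearranging and splitting the series. Because each of the three resulting series is that of an entire function, they converge absolutely for every $t$, so the term-by-term splitting and the interchange of the hybrid-number coefficients with the summation are fully justified; the noncommutativity of the hybridian product is irrelevant because the coefficients multiply each term on the same side and are constant in $m$. The entire argument is therefore a direct consequence of the Binet formula together with the power series of the exponential.
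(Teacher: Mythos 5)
Your proposal is correct and follows essentially the same route as the paper's own proof: substitute the Binet formula from Theorem \ref{teo1}, split the series, and recognize the three sums as $e^{t}$, $e^{\psi_{1}t}$, $e^{\psi_{2}t}$. You are also right that the exponent $e^{\psi_{1}t}$ in the second term of the stated formula is a typo for $e^{\psi_{2}t}$ (a typo the paper's proof itself reproduces), and your added remark justifying the term-by-term rearrangement is a small rigor bonus over the paper's presentation.
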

\begin{proof}
By considering Theorem \ref{teo1} and $\rho=1-p-q$, we have
\begin{align*}
\sum_{m=0}^{\infty}La\mathcal{H}_{m}^{(r)}\frac{t^{m}}{m!}&=\frac{1}{\rho}\left[r\Psi\sum_{m=0}^{\infty}\frac{t^{m}}{m!}+\left(\frac{\Phi_{1}\Psi_{1}\sum_{m=0}^{\infty}\frac{(\psi_{1}t)^{m}}{m!}-\Phi_{2}\Psi_{2}\sum_{m=0}^{\infty}\frac{(\psi_{2}t)^{m}}{m!}}{\psi_{1}-\psi_{2}}\right)\right]\\
&=\frac{1}{\rho}\left[r\Psi e^{t}+\left(\frac{\Phi_{1}\Psi_{1}e^{\psi_{1}t}-\Phi_{2}\Psi_{2}e^{\psi_{1}t}}{\psi_{1}-\psi_{2}}\right)\right].
\end{align*}
\end{proof}

For simplicity of notation, let
\begin{equation}\label{sam}
\mathcal{H}_{n}=\frac{\Phi_{1}\psi_{1}^{m}\Psi_{1}-\Phi_{2}\psi_{2}^{m}\Psi_{2}}{\psi_{1}-\psi_{2}}.
\end{equation}
Then, if $\rho=1-p-q$, the Binet formula of the generalized Leonardo-Alwyn hybrid numbers is given by
\begin{equation}\label{sim}
La\mathcal{H}_{n}^{(r)}=\frac{1}{\rho}\left[r\Psi+\mathcal{H}_{n}\right].
\end{equation}
Note that $\mathcal{H}_{n+2}=p\mathcal{H}_{n+1}+q\mathcal{H}_{n}$ and $\psi_{j}^{2}=p\psi_{j}+q$ for $j=1,2$.

The Vajda's identity for the sequence $\mathcal{H}_{n}$ and generalized Leonardo-Alwyn hybrid number is given in the next theorem.
\begin{theorem}\label{pp}
Let $n\geq 0$,  $u\geq 0$, $v\geq 0$ be integers. Then, we have
\begin{equation}\label{t1}
\mathcal{H}_{n+u}\mathcal{H}_{n+v}-\mathcal{H}_{n}\mathcal{H}_{n+u+v}=\frac{1}{\Delta^{2}}\left(\Phi_{1}\Phi_{2}(-q)^{n}(\psi_{1}^{u}-\psi_{2}^{u})\left[\psi_{1}^{v}\Psi_{2}\Psi_{1}-\psi_{2}^{v}\Psi_{1}\Psi_{2}\right]\right),
\end{equation}
\begin{equation}\label{t2}
\begin{aligned}
La\mathcal{H}_{n+u}^{(r)}La\mathcal{H}_{n+v}^{(r)}&-La\mathcal{H}_{n}^{(r)}La\mathcal{H}_{n+u+v}^{(r)}\\
&=\frac{1}{\rho^{2}}\left\lbrace \begin{array}{cc} 
\Phi_{1}\Phi_{2}(-q)^{n}(\psi_{1}^{u}-\psi_{2}^{u})\left[\psi_{1}^{v}\Psi_{2}\Psi_{1}-\psi_{2}^{v}\Psi_{1}\Psi_{2}\right]\\
+ r\left[\Psi\mathcal{K}_{n}(u)-\mathcal{K}_{n+v}(u)\Psi\right]
\end{array} \right\rbrace,
\end{aligned}
\end{equation}
where $\Delta=\psi_{1}-\psi_{2}$ and $\mathcal{K}_{n}(u)=\mathcal{H}_{n}-\mathcal{H}_{n+u}$.
\end{theorem}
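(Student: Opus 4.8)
The plan is to establish the two identities in sequence: first the hybrid-valued Vajda identity (\ref{t1}) for the auxiliary sequence $\mathcal{H}_n$ directly from its closed form (\ref{sam}), and then to obtain (\ref{t2}) by inserting the Binet formula (\ref{sim}) and invoking (\ref{t1}).

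First I would substitute $\mathcal{H}_k=\Delta^{-1}(\Phi_1\psi_1^{k}\Psi_1-\Phi_2\psi_2^{k}\Psi_2)$, with $\Delta=\psi_1-\psi_2$, into the left-hand side of (\ref{t1}) for each of the four indices $n+u$, $n+v$, $n$ and $n+u+v$. Because $\Phi_1,\Phi_2,\psi_1,\psi_2$ are scalars that commute with the hybrid units while $\Psi_1,\Psi_2$ do not, every product expands into four terms, and the essential bookkeeping is to preserve $\Psi_1,\Psi_2$ in the exact left-to-right order in which they occur. Expanding both $\mathcal{H}_{n+u}\mathcal{H}_{n+v}$ and $\mathcal{H}_n\mathcal{H}_{n+u+v}$, the two ``diagonal'' contributions carrying $\Psi_1\Psi_1$ and $\Psi_2\Psi_2$ acquire the identical scalar weight $\Phi_j^{2}\psi_j^{2n+u+v}$ in both products and therefore cancel under subtraction, leaving only the two ``mixed'' terms.

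The second step is to simplify the surviving mixed coefficients using $\psi_1\psi_2=-q$ from (\ref{e2}): the factor multiplying $\Psi_1\Psi_2$ reduces through $\psi_1^{n+u}\psi_2^{n+v}-\psi_1^{n}\psi_2^{n+u+v}=(-q)^{n}\psi_2^{v}(\psi_1^{u}-\psi_2^{u})$, and symmetrically the factor multiplying $\Psi_2\Psi_1$ reduces to $-(-q)^{n}\psi_1^{v}(\psi_1^{u}-\psi_2^{u})$. Factoring out $\Phi_1\Phi_2(-q)^{n}(\psi_1^{u}-\psi_2^{u})/\Delta^{2}$ then yields exactly the right-hand side of (\ref{t1}). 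I stress that, since $\Psi_1\Psi_2\neq\Psi_2\Psi_1$, these two terms cannot be merged into a single expression; this is precisely why the stated answer must retain both orderings rather than collapsing to a character-type quantity.

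For (\ref{t2}) I would write each factor as $La\mathcal{H}_k^{(r)}=\rho^{-1}(r\Psi+\mathcal{H}_k)$ and expand the two products. The purely quadratic piece $r^{2}\Psi^{2}$ is common to both and cancels; the $\mathcal{H}$-bilinear piece $\mathcal{H}_{n+u}\mathcal{H}_{n+v}-\mathcal{H}_n\mathcal{H}_{n+u+v}$ is supplied verbatim by (\ref{t1}); and the terms linear in $r$ collect, after recognising the differences $\mathcal{H}_{n+v}-\mathcal{H}_{n+u+v}$ and $\mathcal{H}_{n+u}-\mathcal{H}_{n}$ as values of $\mathcal{K}(u)$, into a commutator-type expression of the form $r\left[\Psi\mathcal{K}(u)-\mathcal{K}(u)\Psi\right]$ with the appropriate index shifts. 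The single genuine obstacle throughout is the non-commutativity of the hybridian product: $\Psi$ arises on the \emph{left} in the terms coming from the first factor and on the \emph{right} in those coming from the second, so these contributions never cancel and must be carried separately; this asymmetry is exactly the source of the residual $\Psi\mathcal{K}-\mathcal{K}\Psi$ term. Assembling all pieces over the common denominator $\rho^{2}$ then gives (\ref{t2}).
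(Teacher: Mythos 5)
Your approach is the same as the paper's: identity \eqref{t1} by substituting the closed form \eqref{sam} into the four products, cancelling the $\Psi_{1}\Psi_{1}$ and $\Psi_{2}\Psi_{2}$ terms, and reducing the mixed coefficients with $\psi_{1}\psi_{2}=-q$; then \eqref{t2} by writing $La\mathcal{H}_{k}^{(r)}=\rho^{-1}\left(r\Psi+\mathcal{H}_{k}\right)$, cancelling $r^{2}\Psi^{2}$, and quoting \eqref{t1}. Your treatment of \eqref{t1} is correct and in fact more explicit than the paper's, which displays only the expansion and the final answer; your scalar reductions $\psi_{1}^{n+u}\psi_{2}^{n+v}-\psi_{1}^{n}\psi_{2}^{n+u+v}=(-q)^{n}\psi_{2}^{v}(\psi_{1}^{u}-\psi_{2}^{u})$ and its counterpart check out, as does the resulting ordering of $\Psi_{2}\Psi_{1}$ versus $\Psi_{1}\Psi_{2}$.

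The one place you must not hedge is exactly where you wrote ``with the appropriate index shifts,'' because that is the step where precision matters and where your computation actually diverges from the printed formula. Carrying out your own bookkeeping rule ($\Psi$ on the left when $r\Psi$ comes from the first factor, on the right when it comes from the second) gives for the $r$-linear part
\begin{equation*}
r\Psi\left(\mathcal{H}_{n+v}-\mathcal{H}_{n+u+v}\right)+r\left(\mathcal{H}_{n+u}-\mathcal{H}_{n}\right)\Psi
= r\left[\Psi\,\mathcal{K}_{n+v}(u)-\mathcal{K}_{n}(u)\,\Psi\right],
\end{equation*}
whereas the statement \eqref{t2} (and the paper's proof, which records this collection without intermediate steps) has $r\left[\Psi\,\mathcal{K}_{n}(u)-\mathcal{K}_{n+v}(u)\,\Psi\right]$, i.e.\ the two $\mathcal{K}$-indices transposed. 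In $\mathbb{K}$ these are not interchangeable: their difference is the anticommutator $\Psi\left(\mathcal{K}_{n+v}(u)-\mathcal{K}_{n}(u)\right)+\left(\mathcal{K}_{n+v}(u)-\mathcal{K}_{n}(u)\right)\Psi$, which does not vanish in general. So your method is sound and coincides with the paper's, but as written your proof does not establish \eqref{t2} verbatim; made precise, it establishes the identity with the indices $n$ and $n+v$ on $\mathcal{K}$ interchanged, and the discrepancy appears to lie in the paper's own statement. Since non-commutativity is, as you yourself stress, the entire reason these terms cannot be merged, you should display this final collection explicitly rather than defer it.
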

\begin{proof}
(\ref{t1}): Using Eq. (\ref{sam}), $\Phi_{1}$, $\Phi_{2}$, $\Psi_{1}$ and $\Psi_{2}$ as in Theorem \ref{BinH}, we have
\begin{align*}
(\psi_{1}-\psi_{2})^{2}&\left[\mathcal{H}_{n+u}\mathcal{H}_{n+v}-\mathcal{H}_{n}\mathcal{H}_{n+u+v}\right]\\
&=\left(\Phi_{1}\psi_{1}^{n+u}\Psi_{1}-\Phi_{2}\psi_{2}^{n+u}\Psi_{2}\right)\left(\Phi_{1}\psi_{1}^{n+v}\Psi_{1}-\Phi_{2}\psi_{2}^{n+v}\Psi_{2}\right)\\
&\ \ - \left(\Phi_{1}\psi_{1}^{n}\Psi_{1}-\Phi_{2}\psi_{2}^{n}\Psi_{2}\right)\left(\Phi_{1}\psi_{1}^{n+u+v}\Psi_{1}-\Phi_{2}\psi_{2}^{n+u+v}\Psi_{2}\right)\\
&=\Phi_{1}\Phi_{2}(-q)^{n}(\psi_{1}^{u}-\psi_{2}^{u})\left[\psi_{1}^{v}\Psi_{2}\Psi_{1}-\psi_{2}^{v}\Psi_{1}\Psi_{2}\right].
\end{align*}
Note that $\Psi_{1}\Psi_{2}\neq \Psi_{2}\Psi_{1}$ in above equation.

By formula (\ref{sim}) and Eq. (\ref{t1}), we get
\begin{align*}
\rho^{2}&\left[La\mathcal{H}_{n+u}^{(r)}La\mathcal{H}_{n+v}^{(r)}-La\mathcal{H}_{n}^{(r)}La\mathcal{H}_{n+u+v}^{(r)}\right]\\
&=\left[r\Psi+\mathcal{H}_{n+u}\right]\left[r\Psi+\mathcal{H}_{n+v}\right]- \left[r\Psi+\mathcal{H}_{n}\right]\left[r\Psi+\mathcal{H}_{n+u+v}\right]\\
&=\mathcal{H}_{n+u}\mathcal{H}_{n+v}-\mathcal{H}_{n}\mathcal{H}_{n+u+v}+ r\left[\Psi\mathcal{K}_{n}(u)-\mathcal{K}_{n+v}(u)\Psi\right]\\
&=\frac{1}{\Delta^{2}}\left(\Phi_{1}\Phi_{2}(-q)^{n}(\psi_{1}^{u}-\psi_{2}^{u})\left[\psi_{1}^{v}\Psi_{2}\Psi_{1}-\psi_{2}^{v}\Psi_{1}\Psi_{2}\right]\right)+ r\left[\Psi\mathcal{K}_{n}(u)-\mathcal{K}_{n+v}(u)\Psi\right],
\end{align*}
where $\mathcal{K}_{n}(u)=\mathcal{H}_{n}-\mathcal{H}_{n+u}$.
\end{proof}

It is easily seen that for special values of $u$ and $v$ by Theorem \ref{pp}, we get new identities for generalized Leonardo-Alwyn hybrid numbers:
\begin{itemize}
\item Catalan's identity: $v=-u$.
\item Cassini's identity: $u=1$, $v=-1$.
\item d'Ocagne's identity: $u=1$, $v=m-n$, with $m\geq n$.
\end{itemize}

\begin{corollary}
Catalan identity for generalized Leonardo-Alwyn hybrid numbers. Let $n\geq 0$, $p\geq 0$ be integers such that $n\geq p$. Then
\begin{equation}\label{c1}
\begin{aligned}
La\mathcal{H}_{n+u}^{(r)}La\mathcal{H}_{n-u}^{(r)}&-\left(La\mathcal{H}_{n}^{(r)}\right)^{2}\\
&=\frac{1}{\rho^{2}}\left\lbrace \begin{array}{cc} 
\frac{1}{\Delta^{2}}\Phi_{1}\Phi_{2}(-q)^{n-u}(\psi_{1}^{u}-\psi_{2}^{u})\left[\psi_{2}^{u}\Psi_{2}\Psi_{1}-\psi_{1}^{u}\Psi_{1}\Psi_{2}\right]\\
+ r\left[\Psi\mathcal{K}_{n}(u)-\mathcal{K}_{n-u}(u)\Psi\right]
\end{array} \right\rbrace .
\end{aligned}
\end{equation}
\end{corollary}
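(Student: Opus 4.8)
The plan is to derive this Catalan identity as the special case $v=-u$ of the Vajda identity established in Theorem \ref{pp}, so that essentially no new computation beyond a substitution is needed. First I would record that the hypothesis $n\geq u$ guarantees $n-u\geq 0$, so every index appearing on the left-hand side is a legitimate nonnegative integer. Substituting $v=-u$ into equation (\ref{t2}) then collapses the two products on the left: since $n+v=n-u$ and $n+u+v=n$, the expression becomes $La\mathcal{H}_{n+u}^{(r)}La\mathcal{H}_{n-u}^{(r)}-\left(La\mathcal{H}_{n}^{(r)}\right)^{2}$, exactly the left-hand side of (\ref{c1}).

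The only substantive step is rewriting the right-hand side of (\ref{t2}) so that the negative powers $\psi_{1}^{v}=\psi_{1}^{-u}$ and $\psi_{2}^{v}=\psi_{2}^{-u}$ are eliminated. Here I would invoke the relation $\psi_{1}\psi_{2}=-q$ from (\ref{e2}), which yields $\psi_{1}^{-u}=\psi_{2}^{u}/(-q)^{u}$ and $\psi_{2}^{-u}=\psi_{1}^{u}/(-q)^{u}$. Multiplying the bracket $\left[\psi_{1}^{v}\Psi_{2}\Psi_{1}-\psi_{2}^{v}\Psi_{1}\Psi_{2}\right]$ by the prefactor $(-q)^{n}$ and absorbing the resulting $(-q)^{-u}$ converts $(-q)^{n}$ into $(-q)^{n-u}$ while interchanging the surviving powers, producing $(-q)^{n-u}\left[\psi_{2}^{u}\Psi_{2}\Psi_{1}-\psi_{1}^{u}\Psi_{1}\Psi_{2}\right]$. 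The outer factor $(\psi_{1}^{u}-\psi_{2}^{u})$ and the constant $\Phi_{1}\Phi_{2}/\Delta^{2}$ pass through unchanged, giving the first summand of (\ref{c1}). For the remaining correction term I would only update the index: with $v=-u$ we have $\mathcal{K}_{n+v}(u)=\mathcal{K}_{n-u}(u)$, so $r\left[\Psi\mathcal{K}_{n}(u)-\mathcal{K}_{n+v}(u)\Psi\right]$ becomes $r\left[\Psi\mathcal{K}_{n}(u)-\mathcal{K}_{n-u}(u)\Psi\right]$, matching the second summand.

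The one genuine obstacle---and the single place where care is required---is the \emph{noncommutativity} of the hybrid products $\Psi_{1}\Psi_{2}$ and $\Psi_{2}\Psi_{1}$, already flagged in the proof of Theorem \ref{pp}. When I apply the exponent identities above, I must keep $\Psi_{2}\Psi_{1}$ bound to the term carrying $\psi_{2}^{u}$ and $\Psi_{1}\Psi_{2}$ bound to the term carrying $\psi_{1}^{u}$, and in particular I must \emph{not} factor out a common $\Psi_{1}\Psi_{2}$ or merge the two brackets as one would in a commutative setting. Provided the ordering of the hybrid factors is preserved throughout the exponent manipulation, the identity (\ref{c1}) follows immediately from Theorem \ref{pp} with no further calculation.
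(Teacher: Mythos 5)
Your proposal is correct and follows exactly the paper's route: the paper obtains the Catalan identity simply by setting $v=-u$ in Theorem \ref{pp}, which is precisely your argument. Your write-up additionally makes explicit the conversion $\psi_{j}^{-u}=(-q)^{-u}\psi_{3-j}^{u}$ via $\psi_{1}\psi_{2}=-q$ and the preservation of the noncommutative ordering of $\Psi_{1}\Psi_{2}$ versus $\Psi_{2}\Psi_{1}$, details the paper leaves implicit.
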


\begin{corollary}
Cassini identity for generalized Leonardo-Alwyn hybrid numbers. Let $n\geq 1$ be an integer. Then
\begin{equation}\label{c2}
\begin{aligned}
La\mathcal{H}_{n+1}^{(r)}La\mathcal{H}_{n-1}^{(r)}&-\left(La\mathcal{H}_{n}^{(r)}\right)^{2}\\
&=\frac{1}{\rho^{2}}\left\lbrace \begin{array}{cc} 
\frac{1}{\Delta^{2}}\Phi_{1}\Phi_{2}(-q)^{n-1}(\psi_{1}-\psi_{2})\left[\psi_{2}\Psi_{2}\Psi_{1}-\psi_{1}\Psi_{1}\Psi_{2}\right]\\
+ r\left[\Psi\mathcal{K}_{n}(1)-\mathcal{K}_{n-1}(1)\Psi\right]
\end{array} \right\rbrace .
\end{aligned}
\end{equation}
\end{corollary}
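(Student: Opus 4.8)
The plan is to obtain the Cassini identity as the specialization $u=1$, $v=-1$ of Vajda's identity (\ref{t2}) in Theorem \ref{pp}; equivalently, it is the case $u=1$ of the Catalan identity (\ref{c1}) established in the preceding corollary. Either route reduces the statement to a short bookkeeping computation, so no idea beyond Theorem \ref{pp} is required.

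First I would set $u=1$ and $v=-1$ in (\ref{t2}). Then $n+u=n+1$, $n+v=n-1$, and $n+u+v=n$, so the left-hand side of (\ref{t2}) collapses to $La\mathcal{H}_{n+1}^{(r)}La\mathcal{H}_{n-1}^{(r)}-\left(La\mathcal{H}_{n}^{(r)}\right)^{2}$, which is exactly the left-hand side of (\ref{c2}). The $r$-term on the right transcribes immediately: $r\left[\Psi\mathcal{K}_{n}(u)-\mathcal{K}_{n+v}(u)\Psi\right]$ becomes $r\left[\Psi\mathcal{K}_{n}(1)-\mathcal{K}_{n-1}(1)\Psi\right]$, matching the second row of (\ref{c2}) with no further work.

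The one place that needs care is the closed-form term, because $v=-1$ introduces the negative powers $\psi_{1}^{-1}$ and $\psi_{2}^{-1}$. Here I would invoke $\psi_{1}\psi_{2}=-q$ from (\ref{e2}) to write $\psi_{1}^{-1}=\psi_{2}/(-q)$ and $\psi_{2}^{-1}=\psi_{1}/(-q)$. Substituting these into $\left[\psi_{1}^{v}\Psi_{2}\Psi_{1}-\psi_{2}^{v}\Psi_{1}\Psi_{2}\right]$ factors out $1/(-q)$, turning the prefactor $(-q)^{n}$ into $(-q)^{n-1}$ and leaving the bracket $\left[\psi_{2}\Psi_{2}\Psi_{1}-\psi_{1}\Psi_{1}\Psi_{2}\right]$; combined with $\psi_{1}^{u}-\psi_{2}^{u}=\psi_{1}-\psi_{2}$ this reproduces exactly the expression $\frac{1}{\Delta^{2}}\Phi_{1}\Phi_{2}(-q)^{n-1}(\psi_{1}-\psi_{2})\left[\psi_{2}\Psi_{2}\Psi_{1}-\psi_{1}\Psi_{1}\Psi_{2}\right]$ displayed in (\ref{c2}).

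The main (indeed only) obstacle is to respect the noncommutativity of the hybrid products flagged in the proof of Theorem \ref{pp}, namely $\Psi_{1}\Psi_{2}\neq\Psi_{2}\Psi_{1}$: when clearing the negative powers one must scale $\Psi_{2}\Psi_{1}$ and $\Psi_{1}\Psi_{2}$ by the scalars $\psi_{2}$ and $\psi_{1}$ without ever transposing the two hybrid factors. Since $\psi_{1}$, $\psi_{2}$ and $-q$ are scalars they commute past the hybrid units freely, so this rescaling is harmless provided the order of $\Psi_{1}$, $\Psi_{2}$ is preserved. Once this is observed, the two sides of (\ref{c2}) agree term by term, completing the proof.
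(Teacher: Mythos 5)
Your proposal is correct and follows exactly the route the paper intends: the Cassini identity is obtained by specializing Vajda's identity (\ref{t2}) from Theorem \ref{pp} at $u=1$, $v=-1$, using $\psi_{1}\psi_{2}=-q$ from (\ref{e2}) to convert the negative powers $\psi_{j}^{-1}$ into the prefactor $(-q)^{n-1}$ while preserving the order of the noncommuting products $\Psi_{2}\Psi_{1}$ and $\Psi_{1}\Psi_{2}$. Your explicit handling of $v=-1$ (which formally lies outside the hypothesis $v\geq 0$ of Theorem \ref{pp}) is in fact slightly more careful than the paper, which states the specialization without comment.
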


\begin{corollary}
d'Ocagne identity for generalized Leonardo-Alwyn hybrid numbers. Let $n\geq 0$, $m\geq 0$ be integers such that $m\geq n$. Then
\begin{equation}\label{c3}
\begin{aligned}
La\mathcal{H}_{n+1}^{(r)}La\mathcal{H}_{m}^{(r)}&-La\mathcal{H}_{n}^{(r)}La\mathcal{H}_{m+1}^{(r)}\\
&=\frac{1}{\rho^{2}}\left\lbrace \begin{array}{cc} 
\frac{1}{\Delta^{2}}\Phi_{1}\Phi_{2}(-q)^{n}(\psi_{1}-\psi_{2})\left[\psi_{1}^{m-n}\Psi_{2}\Psi_{1}-\psi_{2}^{m-n}\Psi_{1}\Psi_{2}\right]\\
+ r\left[\Psi\mathcal{K}_{n}(1)-\mathcal{K}_{m}(1)\Psi\right]
\end{array} \right\rbrace .
\end{aligned}
\end{equation}
\end{corollary}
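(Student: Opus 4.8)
The plan is to obtain this d'Ocagne identity as a direct specialization of the Vajda identity established in Theorem \ref{pp}, equation (\ref{t2}). The substitution I would make is $u=1$ and $v=m-n$; since $m\geq n$ guarantees $v=m-n\geq 0$ and trivially $u=1\geq 0$, all the hypotheses $n\geq 0$, $u\geq 0$, $v\geq 0$ of Theorem \ref{pp} are satisfied, so (\ref{t2}) applies verbatim.

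First I would track the four indices appearing on the left-hand side of (\ref{t2}) under this substitution: $n+u=n+1$, $n+v=n+(m-n)=m$, the index $n$ is unchanged, and $n+u+v=n+1+(m-n)=m+1$. Hence the left-hand side $La\mathcal{H}_{n+u}^{(r)}La\mathcal{H}_{n+v}^{(r)}-La\mathcal{H}_{n}^{(r)}La\mathcal{H}_{n+u+v}^{(r)}$ becomes exactly $La\mathcal{H}_{n+1}^{(r)}La\mathcal{H}_{m}^{(r)}-La\mathcal{H}_{n}^{(r)}La\mathcal{H}_{m+1}^{(r)}$, which is the left-hand side of (\ref{c3}).

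Next I would simplify the right-hand side of (\ref{t2}) term by term. With $u=1$ the factor $\psi_{1}^{u}-\psi_{2}^{u}$ collapses to $\psi_{1}-\psi_{2}=\Delta$, and with $v=m-n$ the bracket reads $\psi_{1}^{m-n}\Psi_{2}\Psi_{1}-\psi_{2}^{m-n}\Psi_{1}\Psi_{2}$. For the correction term, recalling that $\mathcal{K}_{n}(u)=\mathcal{H}_{n}-\mathcal{H}_{n+u}$, the choice $u=1$ gives $\mathcal{K}_{n}(1)$, while $\mathcal{K}_{n+v}(u)=\mathcal{K}_{n+(m-n)}(1)=\mathcal{K}_{m}(1)$. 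Assembling these pieces over the common factor $\tfrac{1}{\rho^{2}}$ reproduces precisely the right-hand side of (\ref{c3}).

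The only point that requires care---rather than a genuine obstacle---is that the hybrid product is noncommutative, so in the bracket $\psi_{1}^{m-n}\Psi_{2}\Psi_{1}-\psi_{2}^{m-n}\Psi_{1}\Psi_{2}$ and in the correction $\Psi\mathcal{K}_{n}(1)-\mathcal{K}_{m}(1)\Psi$ the ordering of the hybrid factors $\Psi_{1}$, $\Psi_{2}$ and $\Psi$ must be preserved exactly as inherited from (\ref{t2}). In particular, one must resist the temptation to cancel $\Psi_{1}\Psi_{2}$ against $\Psi_{2}\Psi_{1}$, since $\Psi_{1}\Psi_{2}\neq\Psi_{2}\Psi_{1}$ as noted after (\ref{t1}), and likewise the left and right placements of $\Psi$ around $\mathcal{K}_{n}(1)$ and $\mathcal{K}_{m}(1)$ cannot be interchanged. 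With these orderings respected, no further computation is needed and (\ref{c3}) follows immediately.
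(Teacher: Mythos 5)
Your proposal is correct and follows exactly the paper's route: the paper offers no separate proof of this corollary, obtaining it precisely as you do, by specializing Theorem \ref{pp} with $u=1$, $v=m-n$ (so that $n+u=n+1$, $n+v=m$, $n+u+v=m+1$ and $\mathcal{K}_{n+v}(u)=\mathcal{K}_{m}(1)$), with the same caveat about preserving the order of the noncommuting hybrid factors $\Psi_{1}\Psi_{2}\neq\Psi_{2}\Psi_{1}$. One minor point worth noting: the printed statement of (\ref{t2}) omits the factor $\frac{1}{\Delta^{2}}$ that its own proof produces and that (\ref{c3}) retains, so your ``verbatim'' application should really invoke the corrected form of (\ref{t2}) (equivalently, combine Eq. (\ref{t1}) with the expansion in the proof of Theorem \ref{pp}) to account for that factor.
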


\section{Matrix representation of generalized Leonardo-Alwyn hybrid numbers}
Now, we will give the matrix representation of generalized Leonardo-Alwyn hybrid numbers. Also, we obtain a formula for generalized Leonardo-Alwyn hybrid number $La\mathcal{H}_{m}^{(r)}$, in terms of tridiagonal determinant, by using the same kind of approach that was used by Cereceda in \cite{Ce}.

\begin{theorem}
Let $m\geq 0$ be an integer. Then
\begin{equation}
\left[
\begin{array}{c}
La\mathcal{H}_{m+3}^{(r)} \\ 
La\mathcal{H}_{m+2}^{(r)} \\ 
La\mathcal{H}_{m+1}^{(r)}
\end{array}
\right]=\textbf{Q}_{\mathcal{H}} \cdot\left[
\begin{array}{c}
La\mathcal{H}_{m+2}^{(r)} \\ 
La\mathcal{H}_{m+1}^{(r)} \\ 
La\mathcal{H}_{m}^{(r)}
\end{array}
\right],
\end{equation}
where $$\textbf{Q}_{\mathcal{H}}=\left[
\begin{array}{ccc}
1+p & q-p& -q \\ 
1& 0 & 0 \\ 
0 & 1& 0
\end{array}
\right].$$
\end{theorem}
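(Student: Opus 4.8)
The plan is to reduce the matrix identity to the single scalar-style fact that the hybrid sequence $La\mathcal{H}_{m}^{(r)}$ obeys the very same third-order recurrence (\ref{e1}) satisfied by the underlying generalized Leonardo-Alwyn numbers. Once that recurrence is in hand, the matrix equation is nothing more than reading off the three rows of the product $\textbf{Q}_{\mathcal{H}}\cdot[La\mathcal{H}_{m+2}^{(r)},La\mathcal{H}_{m+1}^{(r)},La\mathcal{H}_{m}^{(r)}]^{\top}$.

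First I would prove that
\[
La\mathcal{H}_{m+3}^{(r)}=(1+p)La\mathcal{H}_{m+2}^{(r)}+(q-p)La\mathcal{H}_{m+1}^{(r)}-qLa\mathcal{H}_{m}^{(r)}.
\]
The argument is componentwise. By the definition (\ref{f1}), each hybrid number is the fixed hybrid-linear combination $1\cdot L\mathcal{A}_{m}^{(r)}+i\,L\mathcal{A}_{m+1}^{(r)}+\varepsilon\,L\mathcal{A}_{m+2}^{(r)}+h\,L\mathcal{A}_{m+3}^{(r)}$, whose coefficients $1,i,\varepsilon,h$ do not depend on $m$. Since each of the four scalar entries $L\mathcal{A}_{m+k}^{(r)}$ (for $k=0,1,2,3$) individually satisfies (\ref{e1}) with its index shifted by $k$, I would substitute these four instances of (\ref{e1}) into the expansion of $La\mathcal{H}_{m+3}^{(r)}$ and then regroup the terms according to the common scalar coefficients $(1+p)$, $(q-p)$ and $-q$. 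Because the real coefficients $(1+p)$, $(q-p)$, $-q$ commute with the hybrid units, the three regrouped blocks are exactly $La\mathcal{H}_{m+2}^{(r)}$, $La\mathcal{H}_{m+1}^{(r)}$ and $La\mathcal{H}_{m}^{(r)}$ by (\ref{f1}), which yields the displayed recurrence.

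With the recurrence established, I would finish by direct matrix multiplication. The first row of $\textbf{Q}_{\mathcal{H}}$ applied to the column $[La\mathcal{H}_{m+2}^{(r)},La\mathcal{H}_{m+1}^{(r)},La\mathcal{H}_{m}^{(r)}]^{\top}$ produces $(1+p)La\mathcal{H}_{m+2}^{(r)}+(q-p)La\mathcal{H}_{m+1}^{(r)}-qLa\mathcal{H}_{m}^{(r)}$, which equals $La\mathcal{H}_{m+3}^{(r)}$ by the recurrence; the second and third rows, being $(1,0,0)$ and $(0,1,0)$, simply reproduce $La\mathcal{H}_{m+2}^{(r)}$ and $La\mathcal{H}_{m+1}^{(r)}$. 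Hence the product equals the left-hand column, as claimed.

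The computation carries no genuine difficulty; the only point requiring a moment of care is bookkeeping the index shifts when invoking (\ref{e1}) for each of the four components, together with the observation that the scalar multipliers commute past the noncommutative hybrid units so that the regrouping into $La\mathcal{H}$-blocks is legitimate. One could alternatively phrase the whole argument as an induction on $m$, but the componentwise reduction makes induction unnecessary.
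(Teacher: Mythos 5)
Your proof is correct and complete. Note that the paper states this theorem without any proof at all, so there is nothing to compare against line by line; your componentwise argument---each coordinate $L\mathcal{A}_{m+k}^{(r)}$ satisfies the third-order recurrence, and the real coefficients $1+p$, $q-p$, $-q$ are central in $\mathbb{K}$, so the fixed hybrid-linear combination inherits the same recurrence, after which the matrix identity is just a row-by-row reading---is exactly the justification the paper implicitly relies on. A marginally shorter alternative, more in the spirit of what the paper has already established, is to difference two consecutive instances of the nonhomogeneous second-order recurrence $La\mathcal{H}_{m+2}^{(r)}=pLa\mathcal{H}_{m+1}^{(r)}+qLa\mathcal{H}_{m}^{(r)}+r\Psi$ proved in Section 2: subtracting the instance at $m$ from the instance at $m+1$ eliminates the constant term $r\Psi$ and yields $La\mathcal{H}_{m+3}^{(r)}=(1+p)La\mathcal{H}_{m+2}^{(r)}+(q-p)La\mathcal{H}_{m+1}^{(r)}-qLa\mathcal{H}_{m}^{(r)}$ directly; both routes buy the same thing, and your version has the advantage of not depending on the earlier hybrid recurrence theorem.
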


\begin{theorem}
Let $m\geq 0$ be an integer. Then
\begin{equation}\label{mat}
\left[
\begin{array}{ccc}
La\mathcal{H}_{m+3}^{(r)} & La\mathcal{G}_{m+4}^{(r)}& -qLa\mathcal{H}_{m+2}^{(r)}\\ 
La\mathcal{H}_{m+2}^{(r)}& La\mathcal{G}_{m+3}^{(r)}& -qLa\mathcal{H}_{m+1}^{(r)} \\
La\mathcal{H}_{m+1}^{(r)}& La\mathcal{G}_{m+2}^{(r)}& -qLa\mathcal{H}_{m}^{(r)}
\end{array}
\right]=\left[
\begin{array}{ccc}
La\mathcal{H}_{3}^{(r)} & La\mathcal{G}_{4}^{(r)}& -qLa\mathcal{H}_{2}^{(r)}\\ 
La\mathcal{H}_{2}^{(r)}& La\mathcal{G}_{3}^{(r)}& -qLa\mathcal{H}_{1}^{(r)} \\
La\mathcal{H}_{1}^{(r)}& La\mathcal{G}_{2}^{(r)}& -qLa\mathcal{H}_{0}^{(r)}
\end{array}
\right]\cdot \textbf{Q}_{\mathcal{H}}^{m},
\end{equation}
where $La\mathcal{G}_{m+1}^{(r)}=La\mathcal{H}_{m+1}^{(r)}-(1+p)La\mathcal{H}_{m}^{(r)}$.
\end{theorem}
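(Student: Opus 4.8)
\emph{The plan} is to argue by induction on $m$ with the companion matrix $\textbf{Q}_{\mathcal{H}}$ acting \emph{on the right}. Write $\mathbf{M}_m$ for the $3\times 3$ matrix of hybrid numbers on the left-hand side of \eqref{mat}, so that the claim becomes $\mathbf{M}_m=\mathbf{M}_0\,\textbf{Q}_{\mathcal{H}}^{m}$, where $\mathbf{M}_0$ is precisely the matrix displayed on the right of \eqref{mat} (it is the $m=0$ instance of the left-hand side). The base case $m=0$ is then immediate, since $\textbf{Q}_{\mathcal{H}}^{0}$ is the identity. For the inductive step it suffices to prove the single-shift identity $\mathbf{M}_{m+1}=\mathbf{M}_m\,\textbf{Q}_{\mathcal{H}}$; combining it with the inductive hypothesis $\mathbf{M}_m=\mathbf{M}_0\,\textbf{Q}_{\mathcal{H}}^{m}$ and associativity of matrix multiplication gives $\mathbf{M}_{m+1}=\mathbf{M}_0\,\textbf{Q}_{\mathcal{H}}^{m+1}$, closing the induction.

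The one ingredient I would prepare first is the third-order homogeneous recurrence
\begin{equation*}
La\mathcal{H}_{m+3}^{(r)}=(1+p)La\mathcal{H}_{m+2}^{(r)}+(q-p)La\mathcal{H}_{m+1}^{(r)}-qLa\mathcal{H}_{m}^{(r)}
\end{equation*}
for the hybrid numbers. It is inherited coordinatewise from \eqref{e1}, since $La\mathcal{H}_m^{(r)}$ is a fixed hybrid-linear combination of four consecutive $L\mathcal{A}^{(r)}$ values; equivalently it follows from the second-order relation \eqref{f2} by writing it at two consecutive indices and subtracting to cancel the inhomogeneous term $r\Psi$. (It is also exactly the top row of the vector identity in the theorem immediately preceding.) I would keep alongside it the defining relation $La\mathcal{G}_{k+1}^{(r)}=La\mathcal{H}_{k+1}^{(r)}-(1+p)La\mathcal{H}_{k}^{(r)}$.

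To verify $\mathbf{M}_{m+1}=\mathbf{M}_m\,\textbf{Q}_{\mathcal{H}}$ I would work column by column, because right multiplication forms real-linear combinations of the columns of $\mathbf{M}_m$. A generic row of $\mathbf{M}_m$ has the shape $\bigl(La\mathcal{H}_{k}^{(r)},\,La\mathcal{G}_{k+1}^{(r)},\,-qLa\mathcal{H}_{k-1}^{(r)}\bigr)$ with $k\in\{m+3,m+2,m+1\}$. The third column of $\textbf{Q}_{\mathcal{H}}$ is $(-q,0,0)^{\mathsf T}$, so the new third column is $-q$ times the first column of $\mathbf{M}_m$, i.e.\ the entry $-qLa\mathcal{H}_{k}^{(r)}$, matching the third column of $\mathbf{M}_{m+1}$. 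The first column $(1+p,1,0)^{\mathsf T}$ produces the entry $(1+p)La\mathcal{H}_{k}^{(r)}+La\mathcal{G}_{k+1}^{(r)}=La\mathcal{H}_{k+1}^{(r)}$, directly from the definition of $La\mathcal{G}$, matching the first column of $\mathbf{M}_{m+1}$. Finally the second column $(q-p,0,1)^{\mathsf T}$ produces $(q-p)La\mathcal{H}_{k}^{(r)}-qLa\mathcal{H}_{k-1}^{(r)}$, which by the third-order recurrence equals $La\mathcal{H}_{k+2}^{(r)}-(1+p)La\mathcal{H}_{k+1}^{(r)}=La\mathcal{G}_{k+2}^{(r)}$, again matching $\mathbf{M}_{m+1}$.

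The computations are routine, so \emph{the only point requiring care} is the non-commutativity of the hybridian product. Since the entries of $\textbf{Q}_{\mathcal{H}}$ are real and hence central in $\mathbb{K}$, every scalar combination above is unambiguous, and the associativity of the hybrid product recorded after Table \ref{table:1} legitimizes iterating $\textbf{Q}_{\mathcal{H}}$ on the right. I would stress that the multiplication must be placed on the right, exactly mirroring the column-shift structure of $\mathbf{M}_m$; this is the sole substantive choice in the argument, and once it is fixed the induction proceeds without further obstacle.
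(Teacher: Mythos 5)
Your proposal is correct and takes essentially the same route as the paper: induction on $m$ reduced to the single-step identity $\mathbf{M}_{m+1}=\mathbf{M}_m\cdot\textbf{Q}_{\mathcal{H}}$, verified entrywise using the inherited third-order recurrence and the definition of $La\mathcal{G}_{k+1}^{(r)}$. If anything, your column-by-column computation is cleaner than the paper's own (whose displayed intermediate matrix contains typos, e.g.\ $La\mathcal{G}$ where $La\mathcal{H}$ is meant in the middle column and a stray $-p$ for $-q$ in the top-right entry of $\textbf{Q}_{\mathcal{H}}$), and your explicit remark that the real entries of $\textbf{Q}_{\mathcal{H}}$ are central in $\mathbb{K}$ supplies a justification the paper leaves tacit.
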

\begin{proof}
We use induction on $m$. If $m=0$ then the result is obvious. Assuming the formula (\ref{mat}) holds for $m\geq 0$, we shall prove it for $m+1$. Using the induction's hypothesis and formula (\ref{f1}), we have
\begin{align*}
&\left[
\begin{array}{ccc}
La\mathcal{H}_{3}^{(r)} & La\mathcal{G}_{4}^{(r)}& -qLa\mathcal{H}_{2}^{(r)}\\ 
La\mathcal{H}_{2}^{(r)}& La\mathcal{G}_{3}^{(r)}& -qLa\mathcal{H}_{1}^{(r)} \\
La\mathcal{H}_{1}^{(r)}& La\mathcal{G}_{2}^{(r)}& -qLa\mathcal{H}_{0}^{(r)}
\end{array}
\right]\cdot \textbf{Q}_{\mathcal{H}}^{m+1}\\
&=\left[
\begin{array}{ccc}
La\mathcal{H}_{3}^{(r)} & La\mathcal{G}_{4}^{(r)}& -qLa\mathcal{H}_{2}^{(r)}\\ 
La\mathcal{H}_{2}^{(r)}& La\mathcal{G}_{3}^{(r)}& -qLa\mathcal{H}_{1}^{(r)} \\
La\mathcal{H}_{1}^{(r)}& La\mathcal{G}_{2}^{(r)}& -qLa\mathcal{H}_{0}^{(r)}
\end{array}
\right]\cdot \textbf{Q}_{\mathcal{H}}^{m}\cdot \left[
\begin{array}{ccc}
1+p& q-p& -q \\ 
1& 0 & 0 \\ 
0 & 1& 0
\end{array}
\right]\\
&=\left[
\begin{array}{ccc}
La\mathcal{H}_{m+3}^{(r)} & La\mathcal{G}_{m+4}^{(r)}& -qLa\mathcal{H}_{m+2}^{(r)}\\ 
La\mathcal{H}_{m+2}^{(r)}& La\mathcal{G}_{m+3}^{(r)}& -qLa\mathcal{H}_{m+1}^{(r)} \\
La\mathcal{H}_{m+1}^{(r)}& La\mathcal{G}_{m+2}^{(r)}& -qLa\mathcal{H}_{m}^{(r)}
\end{array}
\right]\left[
\begin{array}{ccc}
1+p& q-p& -p \\ 
1& 0 & 0 \\ 
0 & 1& 0
\end{array}
\right]\\
&=\left[
\begin{array}{ccc}
La\mathcal{H}_{m+4}^{(r)} & (q-p)La\mathcal{G}_{m+3}^{(r)}-qLa\mathcal{G}_{m+2}^{(r)}& -qLa\mathcal{H}_{m+3}^{(r)}\\ 
La\mathcal{H}_{m+3}^{(r)}& (q-p)La\mathcal{G}_{m+2}^{(r)}-qLa\mathcal{G}_{m+1}^{(r)}& -qLa\mathcal{H}_{m+2}^{(r)} \\
La\mathcal{H}_{m+2}^{(r)}& (q-p)La\mathcal{G}_{m+1}^{(r)}-qLa\mathcal{G}_{m}^{(r)}& -qLa\mathcal{H}_{m+1}^{(r)}
\end{array}
\right]\\
&=\left[
\begin{array}{ccc}
La\mathcal{H}_{m+4}^{(r)} & La\mathcal{G}_{m+5}^{(r)}& -qLa\mathcal{H}_{m+3}^{(r)}\\ 
La\mathcal{H}_{m+3}^{(r)}& La\mathcal{G}_{m+4}^{(r)}& -qLa\mathcal{H}_{m+2}^{(r)} \\
La\mathcal{H}_{m+2}^{(r)}& La\mathcal{G}_{m+3}^{(r)}& -qLa\mathcal{H}_{m+1}^{(r)}
\end{array}
\right]
\end{align*}
which ends the proof.
\end{proof}

\begin{remark}
For $a=b=1$ and $p=q=r=1$, we have the matrix representation of hybrid Leonardo numbers $HLe_{m}$ as follows
$$
\left[
\begin{array}{ccc}
HLe_{m+3} & -HLe_{m+1} & -HLe_{m+2} \\ 
HLe_{m+2} & -HLe_{m} & -HLe_{m+1} \\ 
HLe_{m+1} & -HLe_{m-1} & -HLe_{m} \\ 
\end{array}
\right]=\left[
\begin{array}{ccc}
HLe_{3} & -HLe_{1} & -HLe_{2} \\ 
HLe_{2} & -HLe_{0} & -HLe_{1} \\ 
HLe_{1} & -HLe_{-1} & -HLe_{0} \\ 
\end{array}
\right]\cdot \textbf{Q}_{\mathcal{H}}^{m},
$$
where $$\textbf{Q}_{\mathcal{H}}=\left[
\begin{array}{ccc}
2 & 0& -1 \\ 
1& 0 & 0 \\ 
0 & 1& 0
\end{array}
\right].$$
\end{remark}

Now, we present another way to obtain the $n$-th term of the generalized Leonardo-Alwyn hybrid sequence as the computation of a tridiagonal matrix. We shall adopt the ideas stated in \cite[p. 277]{Ce} using the following result which is stated in such work:
\begin{theorem}\label{cer}
Let $\{x_{n}\}_{n\geq1}$ be any third-order linear sequence defined recursively by the following: $$x_{n+3}=ux_{n+2}+vx_{n+1}+wx_{n},\  n\geq 0,$$ with $x_{0}=A\neq 0$, $x_{1}=B$ and $x_{2}=C$. Then, for all $n\geq 0$:
\begin{equation} \label{f1}
x_{n}= \left| \begin{matrix}
A  & 1     & 0      &   \cdots & \cdots     & \cdots     & 0  \\
Au-B  & u & \frac{1}{A} & 0 &\cdots &\cdots        & 0 \\
0 & Bu-C & u & w & \ddots & \cdots & 0    \\
0  &A & -\frac{v}{w}  & u & w& \ddots & 0  \\
0  &0 & \frac{1}{w}  & -\frac{v}{w} & u & &   \\
\vdots &  \vdots &     & \ddots & \ddots & \ddots & w \\
0 & \cdots    & &     0   & \frac{1}{w}     & -\frac{v}{w}     & u\end{matrix} \right|_{(n+1)\times (n+1)}
\end{equation}
\end{theorem}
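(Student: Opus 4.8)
The plan is to prove the equivalent statement $D_k = x_{k-1}$ for all $k\geq 1$, where $D_k$ denotes the determinant of the $k\times k$ matrix displayed in the statement (so that $D_{n+1}=x_n$). I would argue by induction on the size $k$, showing that the determinants $D_k$ satisfy the \emph{same} third-order recurrence as the sequence $\{x_n\}$ together with matching initial data. The structural fact driving everything is that the displayed matrix is lower Hessenberg: every entry strictly above the superdiagonal vanishes, the superdiagonal is constant equal to $w$, the diagonal is constant equal to $u$, and in the ``tail'' rows (from the fifth row onward) the two subdiagonals carry the constant values $-\tfrac{v}{w}$ and $\tfrac{1}{w}$. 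The special entries $1$, $\tfrac{1}{A}$ and $A$ appearing in the top rows are exactly the corrections needed to make the small determinants match the initial values.

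First I would dispose of the base cases $D_1,D_2,D_3,D_4$ by direct cofactor expansion. A short computation gives $D_1=A=x_0$, then $D_2 = Au-(Au-B)=B=x_1$, and $D_3=C=x_2$, where the entry $\tfrac{1}{A}$ is precisely what cancels the spurious $Au$ and $Bu$ contributions. The case $D_4 = uC+vB+wA = x_3$ is the first one to involve the subdiagonal entries $-\tfrac{v}{w},w$, and it deserves care because the fourth row is a transition row carrying the special entry $A$ in place of $\tfrac{1}{w}$; a direct $4\times 4$ expansion confirms it reproduces $x_3$.

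Next, for $k\geq 5$ the last row has the uniform tail shape $(0,\dots,0,\tfrac{1}{w},-\tfrac{v}{w},u)$, and I would expand $D_k$ along it. Writing the three cofactors with their signs yields
\[
D_k = \tfrac{1}{w}\,M_{k,k-2} + \tfrac{v}{w}\,M_{k,k-1} + u\,M_{k,k},
\]
with $M_{k,k}=D_{k-1}$ immediate. The off-diagonal minors $M_{k,k-1}$ and $M_{k,k-2}$ are evaluated by repeatedly expanding along the rightmost columns: once the last row is deleted, each such column contains a single nonzero entry, namely the superdiagonal $w$, so the expansion peels off one factor of $w$ and lowers the index by one at each step, giving $M_{k,k-1}=w\,D_{k-2}$ and $M_{k,k-2}=w^2 D_{k-3}$. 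Substituting, the powers of $w$ telescope and leave exactly
\[
D_k = u D_{k-1} + v D_{k-2} + w D_{k-3},
\]
which is the recurrence for $\{x_n\}$ in the shifted form $x_{k-1}=u x_{k-2}+v x_{k-3}+w x_{k-4}$. Together with the four base cases, induction closes the argument.

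I expect the main obstacle to be the careful evaluation of the off-diagonal cofactors $M_{k,k-1}$ and $M_{k,k-2}$ in the general inductive step: one must correctly track the sign produced by the Laplace expansion and justify the iterated ``peel off a $w$'' reductions down to the leading principal minors $D_{k-2}$ and $D_{k-3}$, while verifying that the special top rows stay safely inside those principal minors and never interfere with the peeling once $k\geq 5$. The second delicate point is the evaluation of $D_4$, precisely where the transition row meets the tail and where the artificial entries $\tfrac{1}{A}$ and $-\tfrac{v}{w}$ must conspire to deliver the correct combination $uC+vB+wA$.
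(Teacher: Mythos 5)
Your proposal is correct, but there is no internal proof to compare it with: the paper states this theorem without proof, importing it verbatim from Cereceda \cite[p.~277]{Ce} (``using the following result which is stated in such work''), so your argument supplies a justification that the text itself omits rather than an alternative to one it contains. Checking your steps: the base cases come out right, namely $D_1=A$, $D_2=Au-(Au-B)=B$, $D_3=C$ (the entry $\tfrac{1}{A}$ cancels the spurious terms exactly as you say), and a direct expansion of the $4\times 4$ determinant, say along the first column, gives $A\bigl(u^3+uv-\tfrac{Bu^2}{A}+\tfrac{Cu}{A}+w\bigr)-(Au-B)(u^2+v)=uC+vB+wA=x_3$, so the transition row $(0,A,-\tfrac{v}{w},u,w,0,\dots)$ causes no harm. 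In the inductive step for $k\geq 5$, the cofactor signs along the last row $(0,\dots,0,\tfrac{1}{w},-\tfrac{v}{w},u)$ are $(-1)^{2k-2}$, $(-1)^{2k-1}$, $(-1)^{2k}$, yielding exactly your identity $D_k=\tfrac{1}{w}M_{k,k-2}+\tfrac{v}{w}M_{k,k-1}+uM_{k,k}$, and the ``peeling'' is legitimate precisely because for $k\geq 5$ the relevant columns, once the last row is deleted, contain a single nonzero entry: the superdiagonal $w$ sitting at rows $k-1\geq 4$ and $k-2\geq 3$, i.e.\ inside the constant band and away from the exceptional superdiagonal entries $1$ and $\tfrac{1}{A}$ of rows $1$ and $2$; each such $w$ occupies the bottom-right corner of the reduced matrix, so both cofactor signs are $+$, giving $M_{k,k-1}=wD_{k-2}$ and $M_{k,k-2}=w^2D_{k-3}$, whence $D_k=uD_{k-1}+vD_{k-2}+wD_{k-3}$. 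One point you use tacitly and should state: the family of matrices is nested, i.e.\ the $m\times m$ matrix of the statement is the leading principal submatrix of every larger one (the four special rows are frozen at the top), which is what entitles you to identify the peeled minors with $D_{k-2}$ and $D_{k-3}$; and the formula presupposes $w\neq 0$ in addition to the stated $A\neq 0$, since $\tfrac{1}{w}$ and $\tfrac{v}{w}$ appear as entries (harmless here, as a genuine third-order recurrence has $w\neq 0$, and in the paper's application $w=-q$). With those two remarks made explicit, your induction on the size of the determinant is a complete and essentially self-contained proof of the quoted result.
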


In the case of the generalized Leonardo-Alwyn sequence and taking into account the recurrence relation (\ref{f2}), we have $u=1+p$, $v=q-p$, $w=-q$, and initial conditions $La\mathcal{H}_{0}^{(r)}$, $La\mathcal{H}_{1}^{(r)}$ and $La\mathcal{H}_{2}^{(r)}$.Then, by the use of the previous theorem, we have the following result which gives a different way to calculate the $n$-th term of this sequence:
\begin{proposition}
For $n\geq 0$, we have 
\begin{equation} \label{f2}
La\mathcal{H}_{n}^{(r)}= \left| \begin{matrix}
La\mathcal{H}_{0}^{(r)}  & 1     & 0      &   \cdots & \cdots     & \cdots     & 0  \\
Au-B & 1+p & \left(La\mathcal{H}_{0}^{(r)}\right)^{-1} & 0 &\cdots &\cdots        & 0 \\
0 & Bu-C& 1+p & -q & \ddots & \cdots & 0    \\
0  &La\mathcal{H}_{0}^{(r)} & \frac{q-p}{q}  & 1+p & -q & \ddots & 0  \\
0  &0 & \frac{1}{2}  & \frac{q-p}{q} & 1+p & &   \\
\vdots &  \vdots &     & \ddots & \ddots & \ddots & -q \\
0 & \cdots    & &     0   & -\frac{1}{q}     & \frac{q-p}{q}     & 1+p\end{matrix} \right|_{(n+1)\times (n+1)},
\end{equation}
where $Au-B=(1+p)La\mathcal{H}_{0}^{(r)}-La\mathcal{H}_{1}^{(r)}$ and $Bu-C=(1+p)La\mathcal{H}_{1}^{(r)}-La\mathcal{H}_{2}^{(r)}$.
\end{proposition}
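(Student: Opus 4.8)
The plan is to recognize the proposition as the specialization of Cereceda's determinantal formula (Theorem \ref{cer}) to the hybrid sequence $\{La\mathcal{H}_n^{(r)}\}$, so that the real work reduces to two checks: that this sequence obeys a \emph{homogeneous} third-order linear recurrence, and that the parameters $u,v,w$ together with the initial data $A,B,C$ match the entries displayed in the asserted determinant.

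First I would establish the homogeneous recurrence. Writing $La\mathcal{H}_m^{(r)}=L\mathcal{A}_m^{(r)}+iL\mathcal{A}_{m+1}^{(r)}+\varepsilon L\mathcal{A}_{m+2}^{(r)}+hL\mathcal{A}_{m+3}^{(r)}$ and applying the scalar recurrence (\ref{e1}) to each of the four shifted components---the units $1,i,\varepsilon,h$ are constant and factor out---yields at once
$$La\mathcal{H}_{m+3}^{(r)}=(1+p)La\mathcal{H}_{m+2}^{(r)}+(q-p)La\mathcal{H}_{m+1}^{(r)}-qLa\mathcal{H}_{m}^{(r)}.$$
This is precisely the hypothesis of Theorem \ref{cer} with $u=1+p$, $v=q-p$, $w=-q$, and with initial data $A=La\mathcal{H}_0^{(r)}$, $B=La\mathcal{H}_1^{(r)}$, $C=La\mathcal{H}_2^{(r)}$.

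Next I would substitute these values into the determinant of Theorem \ref{cer}. The scalar entries follow immediately: $-v/w=(q-p)/q$, $1/w=-1/q$, and $1/A=(La\mathcal{H}_0^{(r)})^{-1}$, while the two nontrivial corner entries become $Au-B=(1+p)La\mathcal{H}_0^{(r)}-La\mathcal{H}_1^{(r)}$ and $Bu-C=(1+p)La\mathcal{H}_1^{(r)}-La\mathcal{H}_2^{(r)}$, exactly as recorded in the statement. It then remains only to confirm the base cases $n=0,1,2$ by evaluating the $1\times1$, $2\times2$, and $3\times3$ determinants directly, after which Theorem \ref{cer} delivers the general $n$.

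The main obstacle is conceptual rather than computational: Theorem \ref{cer} is framed for a \emph{scalar} sequence, whereas here $A=La\mathcal{H}_0^{(r)}$ lies in $\mathbb{K}$, so one must (i) guarantee that $La\mathcal{H}_0^{(r)}$ is invertible, which holds exactly when its character $\mathcal{C}(La\mathcal{H}_0^{(r)})\neq0$, and (ii) make sense of a determinant whose entries lie in the non-commutative ring $\mathbb{K}$. The key observation is that in the Hessenberg (tridiagonal) structure the only products ever formed are scalar-times-hybrid---the real parameters $u,v,w,1/w,-v/w$ commute with every element of $\mathbb{K}$---together with the single cancelling product $A\cdot A^{-1}=1$. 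Consequently a cofactor expansion carried out consistently along the last row reproduces the homogeneous recurrence with no ordering ambiguity, and Cereceda's argument transfers verbatim. Once this is noted, the proposition follows.
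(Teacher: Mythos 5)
Your proposal is correct and takes essentially the same route as the paper, whose entire proof consists of substituting $u=1+p$, $v=q-p$, $w=-q$ and the initial data $A=La\mathcal{H}_{0}^{(r)}$, $B=La\mathcal{H}_{1}^{(r)}$, $C=La\mathcal{H}_{2}^{(r)}$ into Theorem \ref{cer}. Your additional checks --- the componentwise derivation of the homogeneous third-order recurrence from (\ref{e1}), the invertibility requirement $\mathcal{C}\bigl(La\mathcal{H}_{0}^{(r)}\bigr)\neq 0$, and the observation that every product arising in the expansion is a central scalar times a hybrid number apart from the adjacent cancelling pair $A\cdot A^{-1}$ --- are in fact more careful than the paper, which applies the scalar theorem to hybrid entries without comment.
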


\section{Conclusions}
In this paper, the sequence of generalized Leonardo-Alwyn hybrid numbers defined by a recurrence relation of third order was introduced. Some properties involving this sequence, including the Binet formula and the generating function, were presented. In the future, we intend to discuss the generalized Leonardo-Alwyn hybrid number with negative subscripts and the generalized Leonardo-Alwyn quaternion.

%%%%%%%%%%%%%%%%

%\section{Acknowledgement}
%The author would like to thank the anonymous referee for his/her valuable comments and suggestions.

{\em Gamaliel Cerda-Morales},\\
Instituto de Matem\'aticas, Pontificia Universidad Cat\'olica de Valpara\'iso,\\
Blanco Viel 596, Valpara\'iso, Chile. \\
E-mail: \texttt{gamaliel.cerda.m@mail.pucv.cl}.

\end{document}